\def\N{\mathbb{N}}
\newcommand{\pvector}[1]{
  \begin{pmatrix}
    #1
  \end{pmatrix}} 
\newcommand{\ddirac}[1]{
  \,\boldsymbol{\delta}\!\pvector{#1}\!} 
\renewcommand{\d}{\,{\rm d}} 
\newcommand{\verteq}{\rotatebox{90}{$=$}}
\newcommand{\verteqspaced}{\rotatebox{90}{$=\ \,$}}
\newcommand{\vertbigspaced}{\rotatebox{90}{$<\ \,$}}
\newtheorem{theorem}{Theorem}
\newtheorem{corollary}[theorem]{Corollary}
\newtheorem{lemma}[theorem]{Lemma}
\title[Sharp Agmon--H\"ormander]{Intermittent symmetry breaking and stability of the sharp Agmon--Hörmander estimate on the sphere}
\author{Giuseppe Negro}
\author{Diogo Oliveira e Silva}
\address{ Departamento de Matemática\\ 
Instituto Superior Técnico\\
Av. Rovisco Pais\\ 
1049-001 Lisboa, Portugal.}
\email{giuseppe.negro@tecnico.ulisboa.pt}
\email{diogo.oliveira.e.silva@tecnico.ulisboa.pt}
\begin{document}

\subjclass[2010]{42B10}
\keywords{Agmon--H\"ormander estimate, sharp Fourier restriction theory, maximiser, symmetry breaking, stability, regularity of optimal constants, Bessel function.}

\begin{abstract}
    We compute the optimal constant and characterise the maximisers at all spatial scales for the Agmon--H\"ormander $L^2$-Fourier adjoint restriction estimate on the sphere. The maximisers switch back and forth from being constants to being non-symmetric at the zeros of two Bessel functions. We also study the stability of this estimate and establish a sharpened version in the spirit of Bianchi--Egnell. The corresponding stability constant and maximisers again exhibit a curious intermittent behaviour.
\end{abstract}

\numberwithin{equation}{section}

\maketitle

\section{Introduction}
In~\cite[Theorem 2.1]{AgHo76}, Agmon and H\"ormander established an estimate for compact manifolds, which reads as follows when applied to the unit sphere $\mathbb S^{d-1}\subset\mathbb R^d$, for $d\ge 2$:
    \begin{equation}\label{eq:AgHor}
        \frac1{\rho} \int_{B_\rho} \lvert \widehat{f\sigma}(x)\rvert^2\, \frac{\d x}{(2\pi)^d} \le C_d(\rho) \int_{\mathbb S^{d-1}} \lvert f(\omega)\rvert^2\,\d\sigma(\omega). 
    \end{equation}
Here, $\sigma$ denotes the standard surface measure on $\mathbb S^{d-1}$ and $B_\rho\subset \mathbb R^d$ denotes the ball of radius $\rho>0$ centred at the origin. Also, $\widehat{f\sigma}$ denotes the Fourier transform; see the forthcoming~\eqref{eq:HeckeBochner}.

In the aforementioned paper, it is shown that the quantity $C_d(\rho)$ is uniformly bounded in $\rho$, but no explicit value is given for it. The first purpose of this note is to provide a proof of~\eqref{eq:AgHor} that yields the optimal value $\mathbf{C}_d(\rho)$, as well as the functions that attain it. We will see that, as $\rho$ increases, these maximising functions change intermittently at the zeros of the Bessel functions $J_{\nu}(\rho)$ and $J_{\nu +1}(\rho)$, where $\nu=\tfrac{d}2-1$; recall (e.g.\@ from Appendix \ref{app:Bessel}) that these functions do not have common positive zeros. For $k\in \mathbb N_{\ge 0}$, we introduce the function 
\begin{equation}\label{eq:Lambda_Intro}
    \Lambda_{k, d}(\rho):=\frac \rho 2 J_{\nu +k}^2(\rho) - \frac \rho 2 (J_{\nu + k-1}J_{\nu +k +1})(\rho).
\end{equation}
Finally, we let $\mathcal{H}_k$ denote the vector subspace of $L^2(\mathbb{S}^{d-1})$ consisting of the spherical harmonics of degree $k\in \mathbb N_{\ge 0}$; in particular, $\mathcal H_0$ consists of the constants. See \S\ref{sec:ProofThm1} for details.
\begin{theorem}\label{thm:main}
   For each $\rho>0$, the optimal constant $\mathbf{C}_d(\rho)$ equals
   \begin{equation}\label{eq:optimal_constant}
        \max_{0\neq f\in L^2(\mathbb S^{d-1})} \frac{\displaystyle \frac1\rho\int_{B_\rho} \lvert \widehat{f\sigma}(x)\rvert^2\, \frac{\d x}{(2\pi)^d}}{\lVert f\rVert_{L^2(\mathbb S^{d-1})}^2}=
         \begin{cases}
            \displaystyle
            \Lambda_{0, d}(\rho) , & (J_{\nu}J_{\nu+1})(\rho)\ge  0,\\
            \displaystyle
            \Lambda_{1, d}(\rho), & (J_{\nu}J_{\nu +1})(\rho)\le 0.\\
        \end{cases}
    \end{equation}    
    The maximum is attained if and only if $f\in \mathcal{M}_d(\rho)\setminus\{0\}$, where $\mathcal{M}_d(\rho)$ equals
    
    \begin{tabular}{clll}
        \emph{(i)} & 
            $\mathcal H_0$,& $\qquad$ & $(J_{\nu}J_{\nu +1})(\rho)>0$\,\emph{;} \\
            \emph{(ii)} &
            $\displaystyle \mathcal H_1$, & &  $(J_{\nu}J_{\nu +1})(\rho)<0$\,\emph{;}\\
            \emph{(iii)} & 
            $\displaystyle \mathcal H_0\oplus \mathcal H_1$, & & $J_{\nu}(\rho)=0$\,\emph{;} \\
            \emph{(iv)} & $\displaystyle \mathcal H_0\oplus \mathcal H_1 \oplus \mathcal H_2$, &  &$J_{\nu+1}(\rho)=0$.
    \end{tabular}
\end{theorem}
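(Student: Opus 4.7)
My plan is to diagonalise the Agmon--H\"ormander quadratic form via the spherical harmonic decomposition. Write $f=\sum_{k\ge 0}f_k$ with $f_k\in\mathcal H_k$; the Hecke--Bochner identity gives an expression of the form
\begin{equation}
\widehat{f_k\sigma}(x)=(2\pi)^{d/2}i^{-k}\,|x|^{-\nu}J_{\nu+k}(|x|)\,f_k(x/|x|),
\end{equation}
so that the angular dependence of $|\widehat{f\sigma}|^2$ remains organised by degree. Since spherical harmonics of distinct degrees are $L^2(\mathbb S^{d-1})$-orthogonal, expanding the square and passing to polar coordinates on $B_\rho$ kills all cross-terms. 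Using the classical primitive $\int_0^\rho rJ_\mu^2(r)\,\d r=\tfrac{\rho^2}{2}[J_\mu^2(\rho)-J_{\mu-1}(\rho)J_{\mu+1}(\rho)]$, the left-hand side of \eqref{eq:optimal_constant} reduces to
\begin{equation}
\frac1\rho\int_{B_\rho}|\widehat{f\sigma}(x)|^2\frac{\d x}{(2\pi)^d}=\sum_{k\ge 0}\Lambda_{k,d}(\rho)\,\lVert f_k\rVert_{L^2(\mathbb S^{d-1})}^2.
\end{equation}
Combined with $\lVert f\rVert_{L^2}^2=\sum_k\lVert f_k\rVert_{L^2}^2$, the optimal constant is $\mathbf{C}_d(\rho)=\sup_{k\ge 0}\Lambda_{k,d}(\rho)$ and the maximiser set equals $\bigoplus_{k:\Lambda_{k,d}(\rho)=\mathbf{C}_d(\rho)}\mathcal H_k$.

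The heart of the proof is then controlling the sequence $\{\Lambda_{k,d}(\rho)\}_k$ via the three-term Bessel recurrence $J_{\mu-1}(\rho)+J_{\mu+1}(\rho)=(2\mu/\rho)J_\mu(\rho)$. A short manipulation of \eqref{eq:Lambda_Intro} yields the key identity
\begin{equation}\label{eq:planKey}
\Lambda_{k,d}(\rho)-\Lambda_{k+1,d}(\rho)=J_{\nu+k}(\rho)\,J_{\nu+k+1}(\rho),
\end{equation}
and adding two consecutive such differences, using the recurrence once more, produces the nonnegative pair-sum
\begin{equation}
J_{\nu+j}(\rho)J_{\nu+j+1}(\rho)+J_{\nu+j+1}(\rho)J_{\nu+j+2}(\rho)=\tfrac{2(\nu+j+1)}{\rho}J_{\nu+j+1}^2(\rho)\ge 0.
\end{equation}
Telescoping these pairs gives $\Lambda_{0,d}(\rho)\ge\Lambda_{k,d}(\rho)$ for even $k$ and $\Lambda_{1,d}(\rho)\ge\Lambda_{k,d}(\rho)$ for odd $k$, so the supremum reduces to $\max\{\Lambda_{0,d}(\rho),\Lambda_{1,d}(\rho)\}$; the case split in \eqref{eq:optimal_constant} is then immediate from the sign of $\Lambda_{0,d}-\Lambda_{1,d}=J_\nu(\rho)J_{\nu+1}(\rho)$ given by \eqref{eq:planKey} at $k=0$.

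Equality analysis then pins down $\mathcal M_d(\rho)$. The pair-sum above is strict unless $J_{\nu+j+1}(\rho)=0$, and since consecutive $J_\mu,J_{\mu+1}$ never share a positive zero, the recurrence forbids common positive zeros of $J_\mu$ and $J_{\mu+2}$ as well. Consequently, $\Lambda_{0,d}(\rho)=\Lambda_{k,d}(\rho)$ for $k\ge 2$ can only happen at $k=2$ and when $J_{\nu+1}(\rho)=0$, while $\Lambda_{1,d}(\rho)=\Lambda_{k,d}(\rho)$ for $k\ge 3$ can only happen at $k=3$ and when $J_{\nu+2}(\rho)=0$; the latter forces $J_\nu(\rho)=\tfrac{2(\nu+1)}{\rho}J_{\nu+1}(\rho)$, so that $(J_\nu J_{\nu+1})(\rho)>0$ and $\Lambda_{1,d}$ is not the supremum at such $\rho$. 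Reading off the four possibilities for the sign of $(J_\nu J_{\nu+1})(\rho)$, or for the vanishing of either function, gives cases (i)--(iv) with the stated maximiser spaces. The main obstacle is the derivation of \eqref{eq:planKey}, which compresses the full problem to a single sign condition on $J_\nu J_{\nu+1}$; once this identity is in hand, the remainder is routine bookkeeping with classical Bessel identities.
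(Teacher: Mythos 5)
Your proposal is correct and follows essentially the same route as the paper: diagonalise via spherical harmonics and Hecke--Bochner, evaluate the radial integral by Lommel's identity, and reduce to comparing the sequence $\Lambda_{k,d}(\rho)$ through the first-difference identity $\Lambda_{k,d}-\Lambda_{k+1,d}=J_{\nu+k}J_{\nu+k+1}$ and the nonnegative second difference $\Lambda_{k,d}-\Lambda_{k+2,d}=\tfrac{2(\nu+k+1)}{\rho}J_{\nu+k+1}^2$, with the equality cases governed by the absence of common positive zeros of $J_\mu$, $J_{\mu+1}$ and of $J_\mu$, $J_{\mu+2}$. The only cosmetic difference is that you obtain the second difference by summing two first differences, whereas the paper also notes a direct derivation from the integral formula; the case analysis is otherwise identical.
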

\begin{figure}
    \centering
    \includegraphics[width=0.7\textwidth]{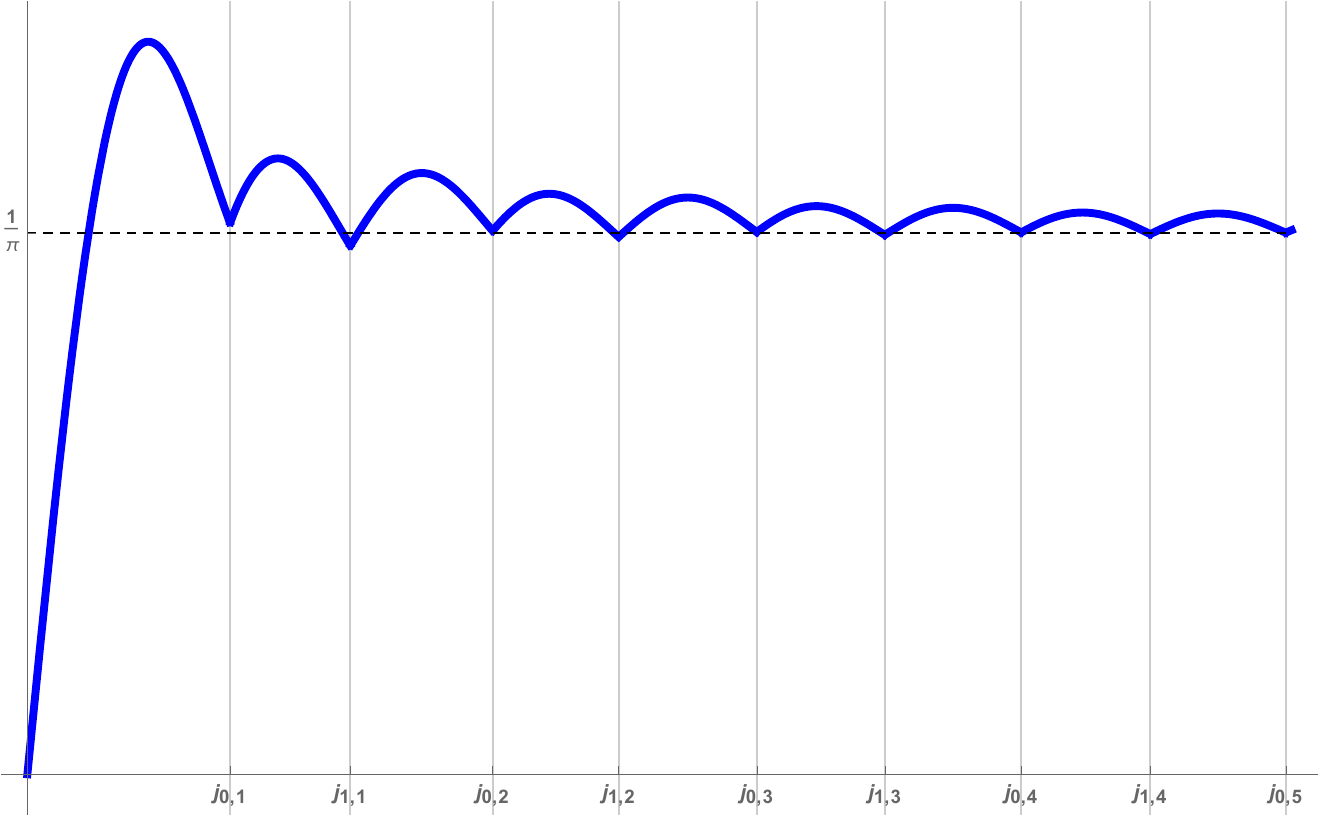}
    \caption{The sharp Agmon--H\"ormander estimate on $\mathbb{S}^1\subset\mathbb R^2$:
    plot of the optimal constant $\mathbf{C}_2(\rho)$, for $0< \rho\leq 15$.}
    \label{fig:CdGraph}
\end{figure}

In case~(ii), the constants $\mathcal H_0$ are not maximisers; this is remarkable, since both sides of~\eqref{eq:AgHor} are rotationally invariant, hence one might expect the maximisers to be invariant functions (the constants being the only such functions on the sphere). In the literature, this phenomenon is sometimes called~\emph{symmetry breaking} (see, e.g.~\cite[p.~25]{Do21}), and it is especially studied in the context of inequalities related  to elliptic PDE. We find it striking that in our context, involving oscillatory operators, symmetry breaking occurs even at the relatively elementary level of~\eqref{eq:AgHor}, which is an $L^2$-estimate and  as such can be studied via orthogonality methods. 

Another noteworthy feature of Theorem~\ref{thm:main} is that the space of maximisers $\mathcal M_d(\rho)$ does not vary continuously with $\rho$, but instead is locally constant, except for a sequence of jumps at the zeroes of $J_\nu$ and $J_{\nu+1}$. This  {lies at the root} of another  {interesting} fact, which follows from the next result. Letting $\d(f, \mathcal{M}_d(\rho))=\inf \{\lVert f-m\rVert_{L^2(\mathbb S^{d-1})}\,:\, m\in \mathcal{M}_d(\rho)\}$, and denoting by 
\begin{equation}\label{eq:delta_deficit}
    \delta_d(f;\rho):= \mathbf{C}_d(\rho)\lVert f\rVert_{L^2(\mathbb S^{d-1})}^2 -\frac{1}{\rho}\int_{B_\rho}\lvert \widehat{f\sigma}(x)\rvert^2\,\frac{\d x}{(2\pi)^d}
\end{equation}
the deficit functional for~\eqref{eq:AgHor}, we establish the following sharpened version in the spirit of Bianchi--Egnell~\cite{BE91}. Note that $\delta_d(f;\rho)\ge 0$, 
with $\delta_d(f;\rho)=0$ if and only if $f\in\mathcal{M}_d(\rho)$.
\begin{theorem}\label{thm:stability}
    For each $\rho>0$, the following inequalities hold, and the corresponding multiplicative constants are optimal:
    \begin{equation}\label{eq:stability}
        \mathbf{S}_d(\rho) \d(f, \mathcal{M}_d(\rho))^2 \le \delta_d(f;\rho)\le \mathbf{C}_d(\rho) \d(f, \mathcal{M}_d(\rho))^2.
    \end{equation}
    There is equality in the right-hand inequality only in the trivial case $f\in \mathcal M_d(\rho)$, and there is equality in the left-hand inequality if and only if $f\in \mathcal{M}_d(\rho)\oplus \mathcal{E}_d(\rho)$, where
    \begin{itemize}
        \item[(i)] for\footnote{The symbol $\verteq$ is to be replaced by either of the values above or the below, which coincide in each case.} $(J_\nu J_{\nu +1})(\rho)>0$, 
        \begin{equation}\label{eq:Scasei}
            \begin{array}{ll|r}
                \mathbf{S}_d(\rho)=
                    \begin{array}{c} 
                        \Lambda_{0, d}(\rho)-\Lambda_{1, d}(\rho), \\
                        \verteq\\
                        \Lambda_{0, d}(\rho)-\Lambda_{2, d}(\rho),\\
                    \end{array}
            & 
                \mathcal{E}_d(\rho)= 
                \begin{array}{l} 
                    \mathcal{H}_1, \\ \mathcal{H}_1\oplus\mathcal{H}_2\oplus\mathcal{H}_3, \\
                    \mathcal{H}_2, 
                \end{array}
            &
                \begin{array}{c}
                    (J_{\nu+1}J_{\nu+2})(\rho)>0, \\   (J_{\nu+1}J_{\nu+2})(\rho)=0 , \\ (J_{\nu+1}J_{\nu+2})(\rho)<0.
                \end{array}
            \end{array}
        \end{equation}   
        \item[(ii)] for $(J_{\nu}J_{\nu+1})(\rho)<0$, letting  $\mathfrak{J}_\nu:=J_{\nu}J_{\nu+1}+J_{\nu+1}J_{\nu+2}+J_{\nu+2}J_{\nu+3}$, 
        \begin{equation}\label{eq:Scaseii}
            \begin{array}{ll|r}
                \mathbf{S}_d(\rho)=
                \begin{array}{c} 
                    \Lambda_{1, d}(\rho)-\Lambda_{0, d}(\rho),\\
                   \verteq\\
                    \Lambda_{1, d}(\rho)-\Lambda_{3, d}(\rho),\\
                \end{array} 
                &
                \mathcal{E}_d(\rho)= 
                \begin{array}{l} 
                    \mathcal H_0,  \\
                    \mathcal{H}_0\oplus \mathcal{H}_3, \\
                    \mathcal H_3,  \\
                \end{array}
            &
                \begin{array}{c}
                    \mathfrak{J}_\nu(\rho)> 0, \\
                    \mathfrak{J}_\nu(\rho)= 0,\\
                    \mathfrak{J}_\nu(\rho)< 0.
                \end{array}
            \end{array}
        \end{equation}   
        \item[(iii)] for $J_\nu(\rho)=0$, 
        \begin{equation}\label{eq:Scaseiii}
            \begin{array}{ll|r}
                \mathbf{S}_d(\rho)=
                \begin{array}{c} 
                    \Lambda_{0, d}(\rho)-\Lambda_{2, d}(\rho),\\
                    \Lambda_{0, d}(\rho)-\Lambda_{3, d}(\rho), \\
                \end{array}
                &
                \mathcal{E}_d(\rho)= 
                \begin{array}{l}
                    \mathcal{H}_2,  \\ \mathcal{H}_3, \\
                \end{array}
                &
                \begin{array}{c}
                     (J_{\nu+2}J_{\nu+3})(\rho)>0, \\
                        (J_{\nu+2}J_{\nu+3})(\rho)<0. 
                \end{array}
            \end{array}
        \end{equation}  
        \item[(iv)] for $J_{\nu +1}(\rho)=0$, 
        \begin{equation}\label{eq:Scaseiv}
            \begin{array}{ll|c}
                \mathbf{S}_d(\rho)=
                \begin{array}{c} 
                    \Lambda_{0, d}(\rho)-\Lambda_{3, d}(\rho),\\
                    \Lambda_{0, d}(\rho)-\Lambda_{4, d}(\rho)\\
                \end{array}
                &
                \mathcal{E}_d(\rho)= 
                \begin{array}{l} 
                    \mathcal{H}_3, \\ \mathcal{H}_4,  \\
                \end{array}
                &
                \begin{array}{c}
                   (J_{\nu+3}J_{\nu+4})(\rho)>0, \\
                 (J_{\nu+3}J_{\nu+4})(\rho)<0.
                \end{array}
            \end{array}
        \end{equation}
    \end{itemize}
\end{theorem}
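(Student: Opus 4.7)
My plan is to leverage the spherical-harmonic decomposition underlying Theorem~\ref{thm:main}. Writing $f=\sum_{k\ge 0}Y_k$ with $Y_k\in\mathcal H_k$, orthogonality together with the Hecke--Bochner formula yields
\[
\tfrac{1}{\rho}\int_{B_\rho}\lvert \widehat{f\sigma}(x)\rvert^2\,\tfrac{\d x}{(2\pi)^d}=\sum_{k\ge 0}\Lambda_{k,d}(\rho)\lVert Y_k\rVert_{L^2(\mathbb S^{d-1})}^2.
\]
With $K:=\{k\ge 0:\Lambda_{k,d}(\rho)=\mathbf{C}_d(\rho)\}$ --- which by Theorem~\ref{thm:main} enumerates the degrees in $\mathcal M_d(\rho)$ --- this gives
\[
\delta_d(f;\rho)=\sum_{k\notin K}\bigl(\mathbf{C}_d(\rho)-\Lambda_{k,d}(\rho)\bigr)\lVert Y_k\rVert^2,\qquad \d(f,\mathcal M_d(\rho))^2=\sum_{k\notin K}\lVert Y_k\rVert^2.
\]
The right-hand bound in~\eqref{eq:stability} is then immediate from $\mathbf{C}_d(\rho)-\Lambda_{k,d}(\rho)\le\mathbf{C}_d(\rho)$; since $J_{\nu+k}$ cannot vanish on an interval, $\Lambda_{k,d}(\rho)>0$ for every $k$, so the bound is strict unless $Y_k=0$ for all $k\notin K$, producing the trivial equality case. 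Sharpness of $\mathbf{C}_d(\rho)$ follows by testing $f\in\mathcal H_k$ as $k\to\infty$, since $\Lambda_{k,d}(\rho)\to 0$.

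The left-hand inequality becomes
\[
\delta_d(f;\rho)\ge\Bigl(\min_{k\notin K}\bigl[\mathbf{C}_d(\rho)-\Lambda_{k,d}(\rho)\bigr]\Bigr)\d(f,\mathcal M_d(\rho))^2,
\]
with equality iff $Y_k=0$ for every $k\notin K$ missing the minimum, so that $\mathbf{S}_d(\rho)$ is this minimum and $\mathcal E_d(\rho)$ is the direct sum of those $\mathcal H_k$ attaining it. The whole theorem therefore reduces to locating the \emph{second-largest} value in $\{\Lambda_{k,d}(\rho)\}_{k\ge 0}$ and its indices. The central tool I would establish is the telescoping identity
\[
\Lambda_{k,d}(\rho)-\Lambda_{k+1,d}(\rho)=J_{\nu+k}(\rho)\,J_{\nu+k+1}(\rho),
\]
derived from~\eqref{eq:Lambda_Intro} by invoking the three-term recurrence $J_{\alpha-1}(\rho)+J_{\alpha+1}(\rho)=\tfrac{2\alpha}{\rho}J_\alpha(\rho)$ to put both $\Lambda_{k,d}$ and $\Lambda_{k+1,d}$ in the common symmetric form $\tfrac{\rho}{2}(J_{\nu+k}^2+J_{\nu+k+1}^2)-c\,J_{\nu+k}J_{\nu+k+1}$ with $c\in\{\nu+k,\nu+k+1\}$ respectively; the difference then collapses to the claimed product. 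Summing two consecutive instances and using the recurrence once more delivers the \emph{even/odd monotonicity}
\[
\Lambda_{k,d}(\rho)-\Lambda_{k+2,d}(\rho)=\tfrac{2(\nu+k+1)}{\rho}J_{\nu+k+1}^2(\rho)\ge 0,
\]
with equality iff $J_{\nu+k+1}(\rho)=0$.

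Because the subsequences $(\Lambda_{2j,d}(\rho))_{j\ge 0}$ and $(\Lambda_{2j+1,d}(\rho))_{j\ge 0}$ are each weakly decreasing, the second-largest $\Lambda_{k,d}(\rho)$ over $k\notin K$ is attained at the smallest even and smallest odd indices outside $K$: $\{1,2\}$ in case~(i), $\{0,3\}$ in case~(ii), $\{2,3\}$ in case~(iii), and $\{3,4\}$ in case~(iv). The comparison reduces to the sign of a single quantity --- respectively $\Lambda_{1,d}-\Lambda_{2,d}=J_{\nu+1}J_{\nu+2}$, $\Lambda_{0,d}-\Lambda_{3,d}=\mathfrak J_\nu$ (by direct telescoping), $\Lambda_{2,d}-\Lambda_{3,d}=J_{\nu+2}J_{\nu+3}$, and $\Lambda_{3,d}-\Lambda_{4,d}=J_{\nu+3}J_{\nu+4}$ --- yielding the trichotomies shown in~\eqref{eq:Scasei}--\eqref{eq:Scaseiv}. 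At the boundary sub-cases where such a quantity vanishes, degeneracies of the even/odd monotonicity coalesce several $\mathcal H_k$: for example in case~(i) at $J_{\nu+2}(\rho)=0$ one obtains $\Lambda_{1,d}=\Lambda_{2,d}=\Lambda_{3,d}$, producing the listed $\mathcal E_d=\mathcal H_1\oplus\mathcal H_2\oplus\mathcal H_3$.

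The main obstacle is verifying that no higher-index $\mathcal H_k$ ties the minimum at such boundary sub-cases. At a zero of $J_{\nu+j}$ the recurrence forces $J_{\nu+j+1}(\rho)=-J_{\nu+j-1}(\rho)\ne 0$ (consecutive Bessel orders sharing no positive zero) and $J_{\nu+j+2}(\rho)=\tfrac{2(\nu+j+1)}{\rho}J_{\nu+j+1}(\rho)\ne 0$; the even/odd monotonicity at $k=j,j+1$ then gives $\Lambda_{j+2,d}(\rho)<\Lambda_{j,d}(\rho)$ and $\Lambda_{j+3,d}(\rho)<\Lambda_{j+1,d}(\rho)$ strictly, so the plateau $\Lambda_{j-1,d}=\Lambda_{j,d}=\Lambda_{j+1,d}$ ends strictly beyond $j+1$ along both parities. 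Combined with the ensuing even/odd monotonicity this pins $\mathcal E_d(\rho)$ to the short list in~\eqref{eq:Scasei}--\eqref{eq:Scaseiv} and completes the identification of both the sharp constant $\mathbf{S}_d(\rho)$ and the equality cases.
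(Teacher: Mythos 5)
Your reduction of the theorem to locating the second-largest value of $\{\Lambda_{k,d}(\rho)\}_{k\ge 0}$ together with its full set of attaining indices is exactly the paper's strategy, and your two identities $\Lambda_{k,d}-\Lambda_{k+1,d}=J_{\nu+k}J_{\nu+k+1}$ and $\Lambda_{k,d}-\Lambda_{k+2,d}=\tfrac{2(\nu+k+1)}{\rho}J_{\nu+k+1}^2$ are the content of Lemma~\ref{lem:MainLemma}; the candidate index pairs $\{1,2\}$, $\{0,3\}$, $\{2,3\}$, $\{3,4\}$ and the comparison quantities are also the right ones. The genuine gap is in your final paragraph: the tie-breaking lemma you prove only treats the configuration ``$J_{\nu+j}(\rho)=0$ for a single $j$'' and deduces non-vanishing of the next two orders. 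That suffices for the boundary sub-case of (i) and for the degenerate hypotheses of (iii) and (iv), but it does not cover case (ii). There, when $\mathfrak J_\nu(\rho)\le 0$ you must rule out $J_{\nu+4}(\rho)=0$ --- otherwise \eqref{eq:recursion_even} gives $\Lambda_{3,d}(\rho)=\Lambda_{5,d}(\rho)$ and $\mathcal H_5$ would enter $\mathcal E_d(\rho)$, contradicting \eqref{eq:Scaseii} --- and the hypothesis is a sign condition on the three-term sum $\mathfrak J_\nu$, not the vanishing of a Bessel function, so your lemma does not apply. The missing step is the following: if $J_{\nu+4}(\rho)=0$, then $(J_{\nu+2}J_{\nu+3})(\rho)>0$ by the recursion \eqref{eq:RR2} (Lemma~\ref{lem:bessel_lemma}), and since $J_{\nu}J_{\nu+1}+J_{\nu+1}J_{\nu+2}=\tfrac{2(\nu+1)}{\rho}J_{\nu+1}^2$ with $J_{\nu+1}(\rho)\neq 0$ in case (ii), one obtains $\mathfrak J_\nu(\rho)=\tfrac{2(\nu+1)}{\rho}J_{\nu+1}^2(\rho)+(J_{\nu+2}J_{\nu+3})(\rho)>0$, a contradiction.

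A second, smaller omission: in cases (iii) and (iv) you list only two sub-cases, which tacitly assumes $(J_{\nu+2}J_{\nu+3})(\rho)\neq 0$, resp.\ $(J_{\nu+3}J_{\nu+4})(\rho)\neq 0$. Excluding, say, $J_\nu(\rho)=J_{\nu+3}(\rho)=0$ is the $m=3$ instance of the Bourget hypothesis; it does not follow from the two-step recurrence computation you give (which only reaches $J_{\nu+j+2}$) and must either be cited or argued separately. Everything else in your proposal --- the diagonalisation of the deficit, the identity $\d(f,\mathcal M_d(\rho))^2=\sum_{k\notin K}\lVert Y_k\rVert_{L^2(\mathbb S^{d-1})}^2$, and the strictness and sharpness of the upper constant via $\Lambda_{k,d}(\rho)>0$ and $\Lambda_{k,d}(\rho)\to 0$ --- matches the paper and is sound.
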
    
\begin{figure}
    \centering
    \includegraphics[width=0.7\textwidth]{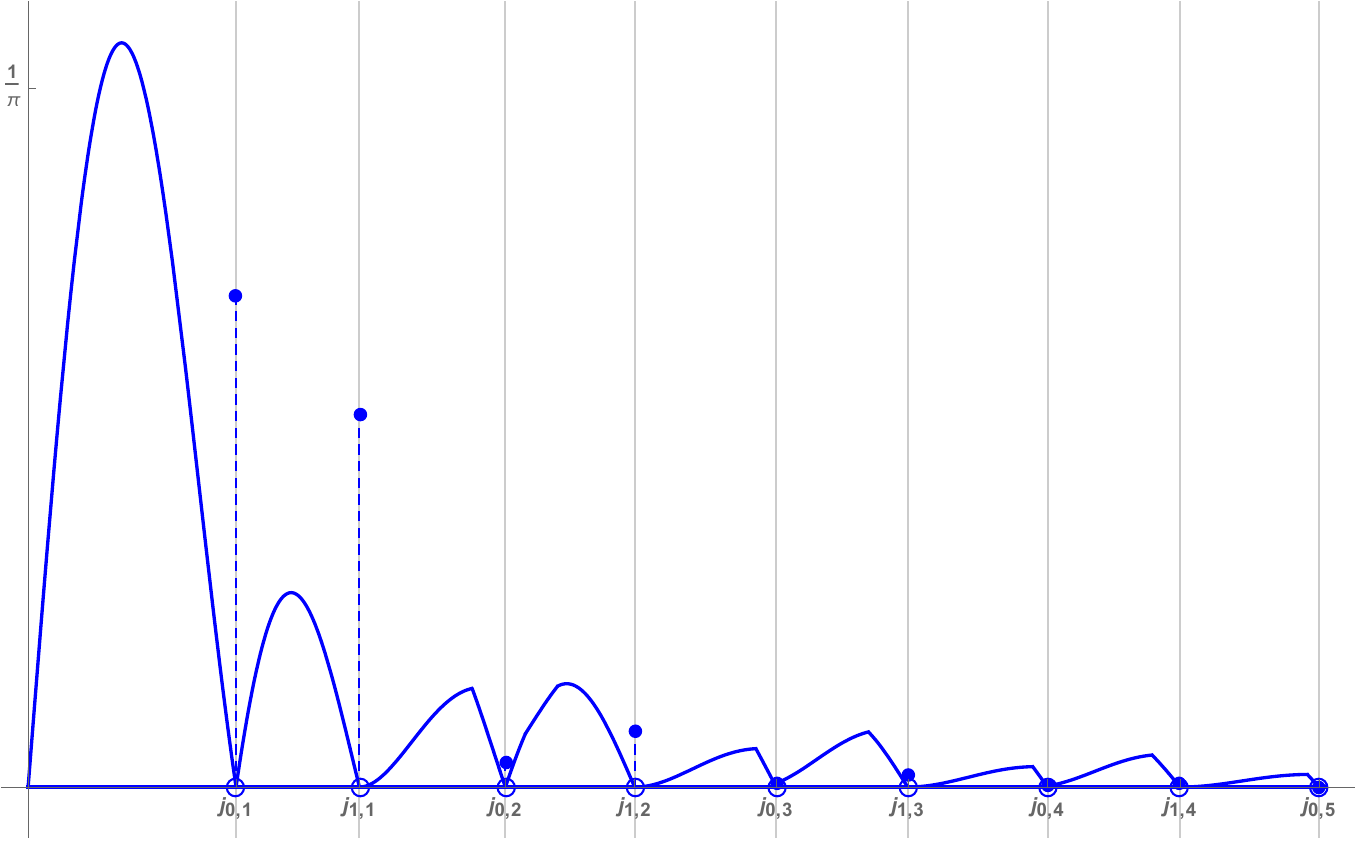}
    \caption{The sharpened Agmon--H\"ormander estimate on $\mathbb{S}^1\subset\mathbb R^2$: 
    plot of the stability constant $\mathbf{S}_2(\rho)$, for $0< \rho\leq 15$.}
    \label{fig:SdGraph}
\end{figure}
The left-hand inequality in Theorem~\ref{thm:stability} quantifies the fact that any function which comes close to attaining the optimal constant in~\eqref{eq:AgHor} must lie within a small distance of the space of maximisers; this phenomenon is called \emph{stability}. We find it remarkable that the stability constant ${\bf S}_d(\rho)$ does not define a continuous function of $\rho$; it exhibits certain jumps, corresponding to the aforementioned jumps of $\mathcal M_d(\rho)$.  This will be proved as part of the following result, which is a study of the regularity of ${\bf C}_d(\rho)$ and ${\bf S}_d(\rho)$, and a consequence of the previous theorems.
\begin{corollary}\label{cor:regularity}
Let $d\geq 2$ and $\rho\in (0,\infty)$. Then:
\begin{itemize}
    \item The function $\rho\mapsto{\bf C}_d(\rho)$ is not differentiable at each positive zero of $J_\nu J_{\nu+1}$. It defines a Lipschitz function on $(0,\infty)$ which is real-analytic between any two consecutive zeroes of $J_\nu J_{\nu+1}$.
    \item The function $\rho\mapsto{\bf S}_d(\rho)$ has a jump discontinuity at each positive zero of $J_\nu J_{\nu+1}$. It defines a piecewise real-analytic function between any two consecutive zeroes of $J_\nu J_{\nu+1}$, which fails to be differentiable at each positive zero of $J_{\nu+2}$. 
\end{itemize}
\end{corollary}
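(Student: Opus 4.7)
The workhorse is the Lommel identity
\begin{equation}
\rho\,\Lambda_{k,d}(\rho)=\int_0^\rho J_{\nu+k}^2(r)\,r\,\d r,
\end{equation}
which yields $\Lambda_{k,d}'(\rho)=J_{\nu+k}^2(\rho)-\Lambda_{k,d}(\rho)/\rho$; combined with the three-term recurrence $J_{\mu-1}+J_{\mu+1}=(2\mu/\rho)J_\mu$ and the standard fact that two consecutive Bessel functions share no positive zero, it delivers all the regularity results. By Theorem~\ref{thm:main}, $\mathbf{C}_d(\rho)=\max\{\Lambda_{0,d}(\rho),\Lambda_{1,d}(\rho)\}$.

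For the first bullet, at each positive zero $\rho_0$ of $J_\nu J_{\nu+1}$ the recurrence collapses $\Lambda_{0,d}(\rho_0)$ and $\Lambda_{1,d}(\rho_0)$ to a common value (thus yielding continuity of $\mathbf{C}_d$), whereas the slope difference $\Lambda_{1,d}'(\rho_0)-\Lambda_{0,d}'(\rho_0)=J_{\nu+1}^2(\rho_0)-J_\nu^2(\rho_0)$ is nonzero, so the pointwise maximum is not differentiable at $\rho_0$. Between consecutive zeros of $J_\nu J_{\nu+1}$, $\mathbf{C}_d$ coincides with one of $\Lambda_{0,d},\Lambda_{1,d}$ and is therefore real-analytic. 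The Lipschitz bound on $(0,\infty)$ follows from uniform boundedness of each $\Lambda_{k,d}'$, using the standard Bessel asymptotics (power series at $0$ and the $\sim\!\sqrt{2/(\pi\rho)}$ behaviour at infinity).

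For the second bullet, the recurrence $J_{\nu+2}=(2(\nu+1)/\rho)J_{\nu+1}-J_\nu$ gives $(J_\nu J_{\nu+1})(\rho_0)=(2(\nu+1)/\rho_0)J_{\nu+1}^2(\rho_0)>0$ at every positive zero $\rho_0$ of $J_{\nu+2}$, so such $\rho_0$ always lies in a case (i) region, where the internal switching in Theorem~\ref{thm:stability} happens precisely at zeros of $J_{\nu+1}J_{\nu+2}$, i.e., of $J_{\nu+2}$. At such a $\rho_0$ the recurrence again forces $\Lambda_{1,d}(\rho_0)=\Lambda_{2,d}(\rho_0)$, while $\Lambda_{1,d}'(\rho_0)-\Lambda_{2,d}'(\rho_0)=J_{\nu+1}^2(\rho_0)\neq 0$, so $\mathbf{S}_d$ is continuous but not differentiable there; and on each subinterval of Theorem~\ref{thm:stability}, $\mathbf{S}_d$ equals a single $\Lambda_{i,d}-\Lambda_{j,d}$ and is real-analytic. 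For the jump at a positive zero $\rho_0$ of $J_\nu J_{\nu+1}$, a direct recurrence calculation shows $\mathfrak{J}_\nu(\rho_0)>0$ (and that the case (i) subcase is automatically prescribed near $\rho_0$); together with the identity $\Lambda_{0,d}(\rho_0)=\Lambda_{1,d}(\rho_0)$ (always) and $\Lambda_{0,d}(\rho_0)=\Lambda_{2,d}(\rho_0)$ (when $J_{\nu+1}(\rho_0)=0$), this forces the one-sided limits of $\mathbf{S}_d$ from both adjacent case (i) and case (ii) regions to equal zero, while $\mathbf{S}_d(\rho_0)$, given by the case (iii) or (iv) formulas, is strictly positive. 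Hence the jump.

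The main obstacle is the orderly bookkeeping of the $(2,3,2,2)$ subcases of Theorem~\ref{thm:stability}: one must match each subcase with the local signs of $J_{\nu+1}J_{\nu+2}$ and $\mathfrak{J}_\nu$ near each zero of $J_\nu J_{\nu+1}$, verify continuity at internal switches via the recurrence, and confirm non-vanishing of the derivative jumps. All of this reduces to finite verifications with the three-term recurrence and the non-coincidence of Bessel zeros.
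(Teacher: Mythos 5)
Your argument is correct, and for the statements about $\mathbf{C}_d$ and for the non-differentiability of $\mathbf{S}_d$ at the zeros of $J_{\nu+2}$ it essentially coincides with the paper's: both proofs reduce to comparing the one-sided derivatives of the two competing branches $\Lambda_{i,d}$ at a crossing point, and both obtain the Lipschitz bound from uniform boundedness of $\Lambda_{0,d}'$ and $\Lambda_{1,d}'$. Your derivative computation goes through $\Lambda_{k,d}'(\rho)=J_{\nu+k}^2(\rho)-\Lambda_{k,d}(\rho)/\rho$, obtained by differentiating the Lommel integral, giving the slope gap $J_{\nu+1}^2-J_{\nu}^2$ at a crossing; the paper instead differentiates the closed-form product and arrives at $J_\nu'J_{\nu+1}$, which equals $-J_{\nu+1}^2$ at $j_{\nu,k}$ by the recurrences, so the two computations are equivalent (yours is arguably cleaner). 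The genuine divergence is the jump discontinuity of $\mathbf{S}_d$: the paper argues softly, choosing $f_\star\in \mathcal{M}_d(\rho^\star)\setminus\mathcal{M}_d(\rho)$ and squeezing $\mathbf{S}_d(\rho)\,\textup{d}(f_\star,\mathcal{M}_d(\rho))^2\le \delta_d(f_\star;\rho)\to 0$ via continuity of the deficit, which avoids all subcase bookkeeping; you instead compute the one-sided limits from the explicit formulas of Theorem~\ref{thm:stability}. Your route works, but the cleanest way to run it is the single inequality $0\le \mathbf{S}_d(\rho)\le \lvert \Lambda_{0,d}(\rho)-\Lambda_{1,d}(\rho)\rvert=\lvert (J_\nu J_{\nu+1})(\rho)\rvert\to 0$, valid in both cases (i) and (ii) because the subtracted maximum always dominates $\Lambda_{1,d}$, resp.\ $\Lambda_{0,d}$; this renders the verification of $\mathfrak{J}_\nu(\rho_0)>0$ and of the local sign of $J_{\nu+1}J_{\nu+2}$ unnecessary (both claims are in fact true, again by the three-term recurrence, but as written they are asserted rather than proved and the $(2,3,2,2)$-subcase matching you defer to the end is precisely the part your shortcut would eliminate).
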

Some of the loss-of-regularity phenomena exhibited by ${\bf C}_d(\rho)$ and ${\bf S}_d(\rho)$ are depicted in Figures \ref{fig:CdGraph} and \ref{fig:SdGraph}, respectively. The behaviour of optimal constants has been studied in the context of the non-oscillatory Brascamp--Lieb inequalities, see \cite{BBFL18,BBCF17}, where it found numerous applications, in particular to Fourier restriction theory.

In the $L^2$-setting of the present paper, estimate \eqref{eq:AgHor} has been generalised  \cite{BRV97}, and sharp and sharpened inequalities in the context of smoothing and trace estimates have been extensively investigated; see \cite{BJOS18, BSS15}, and the references therein.
Within this general framework, the main new contributions of Theorems \ref{thm:main} and \ref{thm:stability} lie in the complete solutions to both the sharp and the sharpened problems, in terms of the explicit values for the optimal constants ${\bf C}_d(\rho),{\bf S}_d(\rho)$ and the full characterisation of the spaces of maximisers $\mathcal M_d, \mathcal E_d$.

We finish the Introduction with some brief remarks on {\it sharp Fourier restricion theory}.
Inequality~\eqref{eq:AgHor} can be regarded as the most basic example of an adjoint Fourier restriction estimate on the sphere \cite[\S 5]{Ta03}. The corresponding optimal constant and maximisers are known only for a few such estimates; most notably, Foschi~\cite{Fo15} proved that constant functions maximise the endpoint Stein--Tomas estimate on $\mathbb S^2$. It turns out that constant functions maximise the $L^2(\mathbb S^{d-1})-L^{2n}(\mathbb R^d)$  adjoint restriction estimate for every $d\in\{3, 4, 5, 6, 7\}$ and integer $n\ge 2$; see~\cite{CaOl15, OlQu21}.
Moreover, sharpened Fourier restriction inequalities have been recently established in \cite{Go19, Go19b, GoNe20, GoZa20, Ne18}.
We refer the interested reader to the survey \cite{FoOl17} for a more extended discussion and further references.
\vspace{.1cm}

{\it Structure of the paper.}
We prove Theorem \ref{thm:main}, Theorem \ref{thm:stability}, and Corollary \ref{cor:regularity} in \S \ref{sec:ProofThm1}, \S \ref{sec:stability}, and \S \ref{sec:regularity}, respectively.
For every $f\in L^2(\mathbb S^{d-1})$, our methods also yield the limit
\begin{equation}\label{eq:AgHorLimit}
    \lim_{\rho\to \infty} \frac1\rho\int_{B_\rho}\lvert \widehat{f\sigma}(x)\rvert^2\, \frac{\d x}{(2\pi)^d}
    = \frac1\pi \int_{\mathbb S^{d-1}} \lvert f(\omega)\rvert^2\, \d\sigma(\omega), 
\end{equation}
in accordance with Agmon--H\"ormander~\cite[Theorem~3.1]{AgHo76}. We discuss this limit in \S \ref{sec:limit}. An immediate corollary is that  $\mathbf C_d(\rho)\to \tfrac 1 \pi$ and $\mathbf{S}_d(\rho)\to 0$, as $\rho \to \infty$; see Figures~\ref{fig:CdGraph} and \ref{fig:SdGraph}.
Finally, we collect all the relevant facts concerning Bessel functions in Appendix \ref{app:Bessel}.

\section{The optimal constant $\mathbf{C}_d(\rho)$ and its maximisers: Proof of Theorem~\ref{thm:main}}
\label{sec:ProofThm1}
Throughout the paper, set $\nu=d/2 -1$. We will always use the notation $Y_k(\omega)$ to denote a spherical harmonic of degree $k\in\mathbb N_{\ge 0}$, which by definition is a complex-valued homogeneous harmonic polynomial in $\omega=(\omega_1, \ldots, \omega_d) \in\mathbb R^d$, of degree $k$, considered as a function on $\mathbb S^{d-1}$. In particular, $Y_k$ is never the zero function. As stated in the Introduction, we denote
\begin{equation}\label{eq:mathcalH_precise}
    \mathcal{H}_k:=\{Y_k\,:\, Y_k \text{ is a spherical harmonic of degree }k\}\cup\{0\},
\end{equation}
which is a finite-dimensional vector subspace of $L^2(\mathbb S^{d-1})$. Spherical harmonics of different degrees are mutually orthogonal and form a complete system, see \cite[Chapter IV, Corollary 2.3]{StWe}, meaning that to each nonzero $f\in L^2(\mathbb S^{d-1})$ there uniquely correspond $F[f]\subseteq \mathbb N_{\ge 0}$ and {$\{Y_k[f]\in\mathcal{H}_k\setminus\{0\}\,:\,k\in F[f]\}$} such that 
\begin{equation}\label{eq:spherical_decomp}
    \begin{array}{cc}
    \displaystyle
        f=\sum_{k\in F[f]} Y_k[f], & 
        \displaystyle\text{ thus } \lVert f\rVert_{L^2(\mathbb S^{d-1})}^2 =\sum_{k\in F[f]} \lVert Y_k[f]\rVert_{L^2(\mathbb S^{d-1})}^2.
    \end{array}
\end{equation}
For notational convenience, we will leave out the dependence on $f$, writing $F$ and $Y_k$ in place of $F[f]$ and $Y_k[f]$, respectively. 

For a single spherical harmonic, we have the Fourier transform formula
\begin{equation}\label{eq:HeckeBochner}
     \begin{array}{cc}\displaystyle
          \widehat{Y_k\sigma}(\xi)=\int_{\mathbb S^{d-1}} Y_k(\omega)e^{-i\omega\cdot \xi}\, \d\sigma(\omega)=\frac{(2\pi)^{\frac d 2}}{i^{k}}\frac{J_{\nu+k}(\lvert\xi\rvert)}{\lvert \xi\rvert^\nu} Y_k\left(\frac{\xi}{\lvert\xi\rvert}\right),&
          \xi\in\mathbb R^d;
    \end{array}
\end{equation}
see, for example,~\cite[Chapter~IV, Theorem 3.10]{StWe}.  
 Using the decomposition~\eqref{eq:spherical_decomp} and integrating in polar coordinates, we obtain the diagonal form of the left-hand side in \eqref{eq:AgHor}:
\begin{equation}\label{eq:AgHor_decomp}
    \begin{split}
        \frac{1}{\rho}\int_{B_\rho} \lvert \widehat{f\sigma}(x)\rvert^2\,\frac{\d x}{(2\pi)^d}&=\sum_{k\in F} \frac1\rho \int_0^\rho \int_{\mathbb S^{d-1}}\lvert \widehat{Y_k\sigma}(r\omega)\rvert^2\, r^{d-1}\frac{\d r\d\sigma(\omega)}{(2\pi)^d} \\
        &=\sum_{k\in F} \frac{1}\rho\left(\int_0^\rho J_{\nu+k}^2(r) r\d r\right)\lVert Y_k\rVert_{L^2(\mathbb S^{d-1})}^2.
    \end{split}
\end{equation}
As we will see in Lemma~\ref{lem:MainLemma} below, the latter integral can be evaluated explicitly, yielding 
\begin{equation}\label{eq:AgHor_diag}
    \frac{1}{\rho}\int_{B_\rho} \lvert \widehat{f\sigma}(x)\rvert^2\,\frac{\d x}{(2\pi)^d}=\sum_{k\in F} \Lambda_{k, d}(\rho)\lVert Y_k\rVert_{L^2(\mathbb S^{d-1})}^2, 
\end{equation}
where the coefficients $\Lambda_{k, d}$ have been introduced in~\eqref{eq:Lambda_Intro}.
In turn, the coefficients $\Lambda_{k, d}$ are related to the optimal constant $\mathbf{C}_d(\rho)$ via the following simple observation.

\begin{lemma}\label{lem:Rayleigh_quotient}
    For each $\rho>0$, 
    \begin{equation}\label{eq:Cd_optimal_bound}
        \mathbf{C}_d(\rho):=\sup_{0\ne f\in L^2(\mathbb S^{d-1})} \frac{\displaystyle\frac1 \rho\int_{B_\rho} \lvert \widehat{f\sigma}(x)\rvert^2\, \frac{\d x}{(2\pi)^d}}{\lVert f \rVert_{L^2(\mathbb S^{d-1})}^2} = \sup\{ \Lambda_{k, d}(\rho)\ :\ k\in\N_{\geq 0} \}.
    \end{equation}
    Letting $K:=\{k\in \mathbb N_{\ge 0}\, :\, \Lambda_{k, d}(\rho)=\sup_{h\in\N_{\geq 0}}  \Lambda_{h, d}(\rho)\}$, we have that $f\in L^2(\mathbb S^{d-1})\setminus\{0\}$ attains the supremum in~\eqref{eq:Cd_optimal_bound} if and only if
    \begin{equation}\label{eq:lin_comb_sph_harm}
        \begin{array}{cc}
            f(\omega)=Y_{k_1}(\omega)+Y_{k_2}(\omega)+\ldots+Y_{k_n}(\omega), & k_j\in K.
        \end{array}
    \end{equation}
\end{lemma}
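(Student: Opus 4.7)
The plan is to deduce Lemma~\ref{lem:Rayleigh_quotient} directly from the diagonal form~\eqref{eq:AgHor_diag} combined with the Parseval identity~\eqref{eq:spherical_decomp}. Setting $M(\rho):=\sup\{\Lambda_{k,d}(\rho):k\in\N_{\ge 0}\}$, these two ingredients express the Rayleigh quotient as the weighted average
\begin{equation*}
    \frac{\tfrac1\rho\int_{B_\rho}\lvert\widehat{f\sigma}(x)\rvert^2\,\tfrac{\d x}{(2\pi)^d}}{\lVert f\rVert_{L^2(\mathbb S^{d-1})}^2}=\frac{\sum_{k\in F}\Lambda_{k, d}(\rho)\,\lVert Y_k\rVert_{L^2(\mathbb S^{d-1})}^2}{\sum_{k\in F}\lVert Y_k\rVert_{L^2(\mathbb S^{d-1})}^2}\le M(\rho),
\end{equation*}
so the upper bound $\mathbf{C}_d(\rho)\le M(\rho)$ is automatic once we know each $\Lambda_{k,d}(\rho)\le M(\rho)$.

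Before extracting the characterisation, I would verify that $M(\rho)$ is a genuine maximum, i.e.\ that $K\neq\emptyset$. From~\eqref{eq:AgHor_decomp} one reads $\Lambda_{k, d}(\rho)=\tfrac1\rho\int_0^\rho J_{\nu+k}^2(r)\,r\,\d r\ge 0$, and the super-exponential decay of $J_{\nu+k}(r)$ in $k$ for fixed $r>0$ (a standard Bessel asymptotic recalled in Appendix~\ref{app:Bessel}) forces $\Lambda_{k, d}(\rho)\to 0$ as $k\to\infty$. Since $\Lambda_{0, d}(\rho)>0$ for every $\rho>0$, the supremum is strictly positive and attained on a finite nonempty set $K\subset\N_{\ge 0}$.

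The upper bound $M(\rho)$ is now seen to be sharp by the test functions $f=Y_k$ for any $k\in K$ and any nonzero $Y_k\in\mathcal H_k$: then $F=\{k\}$ and the weighted average collapses to $\Lambda_{k, d}(\rho)=M(\rho)$. Therefore $\mathbf C_d(\rho)=M(\rho)$. For the equality case, if a nonzero $f$ realises $\mathbf C_d(\rho)$, then
\begin{equation*}
    \sum_{k\in F}\bigl(M(\rho)-\Lambda_{k, d}(\rho)\bigr)\lVert Y_k\rVert_{L^2(\mathbb S^{d-1})}^2=0,
\end{equation*}
and since each summand is nonnegative while each $Y_k$ for $k\in F$ is nonzero, one necessarily has $F\subseteq K$. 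The reverse implication is immediate, which delivers~\eqref{eq:lin_comb_sph_harm}. The only mildly delicate step is the attainment of the supremum; otherwise the argument is a purely formal consequence of orthogonality on $\mathbb S^{d-1}$.
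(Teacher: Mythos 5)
Your argument is correct and follows essentially the same route as the paper: diagonalise via the spherical harmonic decomposition \eqref{eq:spherical_decomp} and \eqref{eq:AgHor_diag}, observe that the Rayleigh quotient is a weighted average of the $\Lambda_{k,d}(\rho)$, test on single harmonics for sharpness, and read off the equality condition $F\subseteq K$. Your additional verification that $K$ is nonempty (via \eqref{eq:crude_bound} and $\Lambda_{0,d}(\rho)>0$) is sound but not needed for the lemma as stated; the paper deliberately defers it to Lemma~\ref{lem:MainLemma} and the subsequent case analysis.
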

We remark that, a priori, $K$ could be empty, or it could be infinite; we will prove that this is never the case.
\begin{proof}[Proof of Lemma~\ref{lem:Rayleigh_quotient}]
From \eqref{eq:spherical_decomp} and \eqref{eq:AgHor_diag} it follows that 
    \begin{equation}\label{eq:Rayleigh}
        \frac{\displaystyle\frac1 \rho\int_{B_\rho} \lvert \widehat{f\sigma}(x)\rvert^2\, \frac{\d x}{(2\pi)^d}}{\lVert f \rVert_{L^2  {(\mathbb S^{d-1})}}^2} = \frac{\displaystyle \sum_{k\in F} \Lambda_{k, d}(\rho)\lVert Y_k\rVert_{L^2  {(\mathbb S^{d-1})}}^2}{\displaystyle \sum_{k\in F} \lVert Y_k\rVert_{L^2  {(\mathbb S^{d-1})}}^2}.
    \end{equation}
    It is clear that the supremum of this quotient is $\sup_{k\ge 0} \Lambda_{k, d}(\rho)$, with equality if and only if $F\subseteq K$, verifying~\eqref{eq:lin_comb_sph_harm} and concluding the proof of the lemma.
\end{proof}
We now record some useful properties of the coefficients $\Lambda_{k, d}(\rho)$.   
\begin{lemma}\label{lem:MainLemma}
    For each $\rho>0$ and $k\in\N_{\geq 0}$, 
    \begin{equation}\label{eq:Lommel}
        \frac1\rho\int_0^\rho J_{\nu+k}^2(r)r\d r=\Lambda_{k, d}(\rho)= \frac \rho2 J_{\nu+k}^2(\rho)-\frac{\rho}2 J_{\nu+k-1}(\rho)J_{\nu+k+1}(\rho).
    \end{equation}
    Moreover,
    \begin{align}\label{eq:recursion_odd}
        \Lambda_{k, d}(\rho)-\Lambda_{k+1, d}(\rho)&=J_{\nu+k}(\rho)J_{\nu+k+1}(\rho), \\
        \label{eq:recursion_even}
        \Lambda_{k, d}(\rho)-\Lambda_{k+2, d}(\rho)&=\frac{2(\nu+k+1)}{\rho}J_{\nu+k+1}^2(\rho).
    \end{align}
    In particular, $\Lambda_{k, d}(\rho)\ge \Lambda_{k+2, d}(\rho)$, with equality if and only if $J_{\nu+k+1}(\rho)=0$.
\end{lemma}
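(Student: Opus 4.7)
The plan is to establish the integral formula \eqref{eq:Lommel} first, and then read off both recursions \eqref{eq:recursion_odd} and \eqref{eq:recursion_even} by direct algebraic manipulation using only the classical three-term Bessel recurrence $J_{\alpha-1}(r)+J_{\alpha+1}(r)=\tfrac{2\alpha}{r}J_\alpha(r)$.

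For the integral formula, I would quote Lommel's classical identity $\int_0^\rho r J_{\nu+k}^2(r)\,dr=\tfrac{\rho^2}{2}\bigl(J_{\nu+k}^2(\rho)-J_{\nu+k-1}(\rho)J_{\nu+k+1}(\rho)\bigr)$ from Appendix \ref{app:Bessel} and divide through by $\rho$. The boundary contribution at $r=0$ vanishes because the asymptotic $J_\beta(r)\sim(r/2)^\beta/\Gamma(\beta+1)$ forces both $r^2 J_{\nu+k}^2(r)$ and $r^2 J_{\nu+k-1}(r)J_{\nu+k+1}(r)$ to be of order $r^{2(\nu+k)+2}$. A self-contained derivation is also available: differentiate the right-hand side and simplify using $J_\alpha'=\tfrac12(J_{\alpha-1}-J_{\alpha+1})$ together with the three-term recurrence.

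With \eqref{eq:Lommel} in hand, \eqref{eq:recursion_odd} follows by a direct expansion: the four terms in $\Lambda_{k, d}(\rho)-\Lambda_{k+1, d}(\rho)$ regroup as
\begin{equation*}
\frac{\rho}{2}\Bigl\{J_{\nu+k}(\rho)\bigl[J_{\nu+k}(\rho)+J_{\nu+k+2}(\rho)\bigr]-J_{\nu+k+1}(\rho)\bigl[J_{\nu+k-1}(\rho)+J_{\nu+k+1}(\rho)\bigr]\Bigr\},
\end{equation*}
and applying the three-term recurrence to each bracketed sum (with $\alpha=\nu+k+1$ and $\alpha=\nu+k$, respectively) makes the $\rho$-factors cancel, leaving $\bigl((\nu+k+1)-(\nu+k)\bigr)J_{\nu+k}(\rho)J_{\nu+k+1}(\rho)=J_{\nu+k}(\rho)J_{\nu+k+1}(\rho)$. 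Then \eqref{eq:recursion_even} drops out by telescoping: summing the instances of \eqref{eq:recursion_odd} at the indices $k$ and $k+1$ and factoring $J_{\nu+k+1}(\rho)$ out, one final application of the three-term recurrence at $\alpha=\nu+k+1$ converts $J_{\nu+k}(\rho)+J_{\nu+k+2}(\rho)$ into $\tfrac{2(\nu+k+1)}{\rho}J_{\nu+k+1}(\rho)$. The monotonicity $\Lambda_{k,d}(\rho)\ge\Lambda_{k+2,d}(\rho)$ and the characterisation of equality are then immediate, since $\nu+k+1>0$.

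The only genuinely non-trivial ingredient in this plan is Lommel's integral identity itself; once it is imported from the appendix, everything else is straightforward bookkeeping with the Bessel recurrence, which is why I do not foresee any real obstacle here.
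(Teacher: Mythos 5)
Your proposal is correct and follows essentially the same route as the paper: Lommel's identity is imported for \eqref{eq:Lommel}, and the regrouping plus three-term recurrence argument for \eqref{eq:recursion_odd} is identical to the paper's computation. The only (immaterial) difference is that you obtain \eqref{eq:recursion_even} by telescoping two instances of \eqref{eq:recursion_odd} — an alternative the paper explicitly acknowledges as viable — whereas the paper instead evaluates $\frac1\rho\int_0^\rho \bigl(J_{\nu+k}^2(r)-J_{\nu+k+2}^2(r)\bigr)r\,\d r$ directly using both recursions \eqref{eq:RR2}--\eqref{eq:RR1} and the vanishing $J_{\nu+k+1}(0)=0$.
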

\begin{proof}
    Identity~\eqref{eq:Lommel} is due to Lommel; see~\cite[\S 5.11 (11)]{Wa44}. We apply this identity, together with the Bessel recursion~\eqref{eq:RR2} in Appendix~\ref{app:Bessel}, to obtain
    \begin{equation}\label{eq:LambdaZero_vs_LambdaOne}
        \begin{split}
            &\Lambda_{k, d}(\rho)-\Lambda_{k+1,d}(\rho) = \\  &=\frac \rho 2 \left[
                J_{\nu+k}(\rho)\left( J_{\nu+k}(\rho)+J_{\nu+k+2}(\rho)\right) -J_{\nu+k+1}(\rho)\left(J_{\nu+k-1}(\rho)+J_{\nu+k+1}(\rho)\right)\right] \\ 
            &=(\nu+k+1)(J_{\nu+k} J_{\nu+k+1})(\rho) - (\nu+k) (J_{\nu+k}J_{\nu+k+1})(\rho)   \\     
            &=(J_{\nu+k} J_{\nu+k+1})(\rho),
        \end{split}
    \end{equation}
    proving~\eqref{eq:recursion_odd}. We could use~\eqref{eq:recursion_odd} to prove~\eqref{eq:recursion_even}, but the latter can also be seen to follow directly from the integral formula~\eqref{eq:Lommel} for $\Lambda_{k, d}$; indeed, applying the Bessel   recursions~\eqref{eq:RR2}--\eqref{eq:RR1}, we have that
    \begin{equation}\label{eq:MainLemmaProof}
        \begin{split}
            \Lambda_{k, d}(\rho)-\Lambda_{k+2, d}(\rho) &=\frac1\rho\int_0^\rho (J^2_{\nu+k}(r)-J^2_{\nu+k+2}(r))r\, \d r \\
            &=\frac{4(\nu+k+1)}{\rho}\int_0^{\rho}J_{\nu+k+1}(r)J'_{\nu+k+1}(r)\, \d r \\
            &=\frac{2(\nu+k+1)}{\rho}J^2_{\nu+k+1}(\rho)  {,}
        \end{split}
    \end{equation}
    where in the last computation we used the fact that $J_{\nu+k+1}(0)=0$.
\end{proof}
Having settled these classical preliminaries, we now start with the actual proof of Theorem~\ref{thm:main}. 
By Lemma \ref{lem:MainLemma} we obtain the following chains of inequalities.
\begin{itemize}
    \item[(i)]  {{\it Case}} $(J_\nu J_{\nu+1})(\rho)>0$. We have 
    \begin{equation}\label{eq:big_chain_i}
        \begin{split}
            \Lambda_{0, d}(\rho)>\Lambda_{1, d}(\rho)\ge \Lambda_{3, d}(\rho)\ge \Lambda_{5, d}(\rho)\ge \ldots \\
            \Lambda_{0, d}(\rho)>\Lambda_{2, d}(\rho)\ge \Lambda_{4, d}(\rho)\ge \Lambda_{6, d}(\rho)\ge \ldots
        \end{split}
    \end{equation}
    The strict inequalities follow from~\eqref{eq:recursion_odd} and~\eqref{eq:recursion_even}, respectively.
    \item[(ii)]  {{\it Case}} $(J_{\nu}J_{\nu+1})(\rho)<0$. We have 
    \begin{equation}\label{eq:big_chain_ii}
        \begin{split}
            \Lambda_{1, d}(\rho)>\Lambda_{0, d}(\rho)\ge \Lambda_{2, d}(\rho)\ge \Lambda_{4, d}(\rho)\ge \ldots\\
            \Lambda_{1, d}(\rho)>\Lambda_{3, d}(\rho)\ge \Lambda_{5, d}(\rho)\ge \Lambda_{7, d}(\rho)\ge \ldots
        \end{split}
    \end{equation}
    The strict inequalities are obtained as before, noting that $J_{\nu+2}(\rho)\ne 0$; see Lemma~\ref{lem:bessel_lemma}.
    \item[(iii)]  {{\it Case}} $J_{\nu}(\rho)=0$.  Note that $J_{\nu+1}(\rho)\ne0$ and $J_{\nu+2}(\rho)\ne 0$, by Lemma \ref{lem:Bourget}. Reasoning as in the previous steps, we obtain
    \begin{equation}\label{eq:big_chain_iii}
        \begin{array}{cc}
            \Lambda_{0, d}(\rho)\!\!\!\! & >\Lambda_{2, d}(\rho)\ge \Lambda_{4, d}(\rho)\ge \Lambda_{6, d}(\rho)\ge\ldots \\
            \verteqspaced \!\!\!\!& \\
            \Lambda_{1,d}(\rho)\!\!\!\!& >\Lambda_{3,d}(\rho) \ge \Lambda_{5, d}(\rho) \ge \Lambda_{7, d}(\rho)\ge\ldots
        \end{array}
    \end{equation}
    \item[(iv)]  {{\it Case}} $J_{\nu+1}(\rho)= 0$.  {Since} $J_{\nu+2}(\rho)\ne  0  {\ne J_{\nu+3}(\rho)}$, we have
    \begin{equation}\label{eq:big_chain_iv}
        \begin{array}{cc}
            \Lambda_{0, d}(\rho)\!\!\!\! & =\Lambda_{2, d}(\rho) > \Lambda_{4, d}(\rho)\ge \Lambda_{6, d}(\rho)\ge \ldots \\
            \verteqspaced\!\!\!\! & \\
            \Lambda_{1,d}(\rho)\!\!\!\!& >\Lambda_{3,d}(\rho) \ge \Lambda_{5, d}(\rho) \ge \Lambda_{7, d}(\rho)\ge \ldots
        \end{array}
    \end{equation}
\end{itemize}
We conclude that $\mathbf{C}_d(\rho)=\Lambda_{0, d}(\rho)$ in Case (i), and $\mathbf{C}_d(\rho)=\Lambda_{1, d}(\rho)$ in Case (ii), while $\mathbf{C}_d(\rho)=\Lambda_{k, d}(\rho)$ for $k\in\{0, 1\}$ in Case (iii), and  for $k\in\{0,1,2\}$ in Case (iv). The optimal constant is attained by single spherical harmonics of degree $0$ or $1$ in Cases (i) and (ii), respectively, and it is attained by linear combinations of spherical harmonics of degrees $\{0, 1\}$ or $\{0, 1, 2\}$ in Cases (iii) and (iv), respectively. The proof of Theorem~\ref{thm:main} is complete. 

\section{ Sharp stability: Proof of Theorem~\ref{thm:stability} }\label{sec:stability}
Here we prove Theorem \ref{thm:stability}.
Reasoning as in the previous section, we write the deficit functional as follows:
\begin{equation}\label{eq:deficit}
    \begin{split}
        \delta_d(f;\rho)&=\mathbf{C}_d(\rho)\lVert f\rVert_{L^2(\mathbb S^{d-1})}^2 -\frac{1}{\rho}\int_{ {B_\rho}} \lvert \widehat{f\sigma}(x)\rvert^2\,\frac{\d  x}{(2\pi)^d} \\ 
        &= \sum_{k\,:\,\Lambda_{k, d}(\rho)<\mathbf{C}_d(\rho)}  (\mathbf C_d(\rho) -\Lambda_{k, d}(\rho)) \lVert Y_k\rVert_{L^2(\mathbb S^{d-1})}^2,
    \end{split}
\end{equation}
and we notice that 
\begin{equation}\label{eq:distance_is_sum_squares}
    \sum_{k\,:\,\Lambda_{k, d}(\rho)<\mathbf{C}_d(\rho)}\lVert Y_k\rVert_{L^2(\mathbb S^{d-1})}^2 = \d(f, \mathcal{M}_d(\rho))^2.
\end{equation}
We immediately infer the inequalities 
\begin{equation}\label{eq:stability_primordial}
    \mathbf{S}_d(\rho)\d (f, \mathcal M_d(\rho))^2 \le \delta_d(f; \rho)\le {\tilde{\mathbf{C}}}_d(\rho)\d (f, \mathcal M_d(\rho))^2,
\end{equation}
where, letting $A:=\{ \mathbf{C}_d(\rho)-\Lambda_{k, d}(\rho)\, :\, k\in \mathbb{N}_{\ge0}\text{ such that }\Lambda_{k, d}(\rho)<\mathbf{C}_d(\rho)\},$
\begin{equation}\label{eq:S_d_primordial}
    \begin{array}{cc}
        \mathbf{S}_d(\rho):=\inf A, & \tilde{\mathbf{C}}_d {(\rho)}:=\sup A.
    \end{array}
\end{equation}
By the Bessel limit~\eqref{eq:crude_bound}, we see that $\Lambda_{k, d}(\rho)\to 0$ as $k\to \infty$, for all $\rho>0$. In particular, we infer that $ \tilde{\mathbf{C}}_d(\rho) = \mathbf{C}_d(\rho)$, and that the right-hand inequality in~\eqref{eq:stability_primordial} is always strict, except in the trivial case  {when} $\d(f, \mathcal{M}_d(\rho))=0$,  {i.e.}\@ $f\in \mathcal{M}_d(\rho)$. This completes the proof of the right-hand inequality in Theorem~\ref{thm:stability}.

To complete the proof of the left-hand inequality, we need to compute $\mathbf{S}_d(\rho)$ and the set of indices $k\in \mathbb N_{\ge 0}$ that attain {the infimum in~\eqref{eq:S_d_primordial}}; the space $\mathcal{E}_d(\rho)$ will then coincide with the direct sum of the corresponding $\mathcal H_k$. In the following, we will repeatedly  {make} use  {of} Lemma~\ref{lem:MainLemma}, and the index $h$ will always range over $\mathbb N_{\ge 0}$.

     {\it{Case}} (i): $(J_\nu J_{\nu +1})(\rho)>0$. We have $\mathbf{C}_d(\rho)=\Lambda_{0, d}(\rho)$. By the inequalities~\eqref{eq:big_chain_i}, 
    \begin{equation}\label{eq:S_d_case_i}
        \mathbf{S}_d(\rho)=\Lambda_{0, d}(\rho)-\max_{k\,\in\{1, 2\}} \Lambda_{k, d}(\rho).
    \end{equation}
    Now we recall that 
    \begin{equation}\label{eq:StabCaseiSelector}
        \Lambda_{1, d}(\rho)-\Lambda_{2, d}(\rho)=(J_{\nu +1}J_{\nu +2})(\rho).
    \end{equation}
    So, if $(J_{\nu+1}J_{\nu+2})(\rho)>0$,  then  $\mathbf{S}_d(\rho)=\Lambda_{0, d}(\rho)-\Lambda_{1, d}(\rho)$. Moreover, by~\eqref{eq:recursion_even} we see that $\Lambda_{1, d}(\rho)>\Lambda_{3, d}(\rho)\ge \Lambda_{3+2h, d}(\rho)$, thus  $\mathcal{E}_d(\rho)=\mathcal H_1$. 
    
    On the other hand, if $(J_{\nu +1}J_{\nu+2})(\rho)<0$, then $\mathbf{S}_d(\rho)=\Lambda_{0, d}(\rho)-\Lambda_{2, d}(\rho),$ and $J_{\nu+3}(\rho)\ne 0$ by Lemma~\ref{lem:bessel_lemma}; so $\Lambda_{2, d}(\rho)>\Lambda_{4, d}(\rho)\ge \Lambda_{4+2h, d}(\rho)$, therefore $\mathcal{E}_d(\rho)=\mathcal H_2$. 
    
    The only remaining alternative is that $J_{\nu +2}(\rho)=0$; but then, both $J_{\nu +3}(\rho)\ne0$ and $J_{\nu +4}(\rho)\ne 0$ by Lemma \ref{lem:Bourget}. So
     \begin{equation}\label{eq:big_chain_i_stab}
        \begin{array}{ccc} 
            \Lambda_{0, d}(\rho)\!\!\!\! & >\Lambda_{2, d}(\rho) \!\!\!\!&> \Lambda_{4, d}(\rho)\ge \Lambda_{4+2h, d}(\rho) \\
            \vertbigspaced\!\!\!\! & \verteqspaced\!\!\!\! \\ 
            \Lambda_{1,d}(\rho)\!\!\!\!& =\Lambda_{3,d}(\rho)\!\!\!\!& > \Lambda_{5, d}(\rho) \ge \Lambda_{5+2h, d}(\rho)
        \end{array}
    \end{equation}
    and we conclude that 
    \begin{align}\label{eq:stab_i_concl}
        \mathbf{S}_d(\rho)&=\Lambda_{0, d}(\rho)-\Lambda_{1, d}(\rho)=\Lambda_{0, d}(\rho)-\Lambda_{2, d}(\rho)=\Lambda_{0, d}(\rho)-\Lambda_{3, d}(\rho), \\
        \mathcal{E}_d(\rho)&= \mathcal{H}_1\oplus \mathcal{H}_2 \oplus \mathcal{H}_3.
    \end{align}
     
     {\it{Case}} (ii): $(J_\nu J_{\nu +1})(\rho)<0$.  We have $\mathbf{C}_d(\rho)=\Lambda_{1, d}(\rho)$. By the inequalities~\eqref{eq:big_chain_ii}, 
    \begin{equation}\label{eq:S_d_case_ii}
        \mathbf{S}_d(\rho)=\Lambda_{1, d}(\rho)-\max_{k  {\in\{0, 3\}}} \Lambda_{k, d}(\rho),
    \end{equation}
    so we are led to define $\mathfrak{J}_{\nu}(\rho)$ as follows, where we also use the Bessel recursion~\eqref{eq:RR2}:
    \begin{equation}\label{eq:StabCaseiiSelector}
        \begin{split}
            \Lambda_{0, d}(\rho)-\Lambda_{3, d}(\rho) &=\Lambda_{0, d}(\rho)-\Lambda_{1, d}(\rho)+\Lambda_{1, d}(\rho)-\Lambda_{3, d}(\rho) \\
            &=(J_\nu J_{\nu+1})(\rho)+\frac2\rho(\nu+2)J_{\nu+2}^2(\rho)\\
            &{= (J_\nu J_{\nu+1})(\rho) +(J_{\nu +1}J_{\nu +2})(\rho) + ( {J_{\nu+2}J_{\nu+3}})(\rho)=}:\mathfrak{J}_{\nu}(\rho).
        \end{split}
    \end{equation}
    If $\mathfrak{J}_\nu(\rho)>0$ then ${\bf S}_d(\rho)=\Lambda_{1, d}(\rho)-\Lambda_{0, d}(\rho)$ and  {so}, since $J_{\nu+1}(\rho)\ne 0$, it follows that $\Lambda_{0, d}(\rho)>\Lambda_{2, d}(\rho)\ge \Lambda_{2+2h, d}(\rho)$, thus $\mathcal{E}_d(\rho)=\mathcal{H}_0$. 
    
    On the other hand, if $\mathfrak{J}_\nu(\rho)<0$ then ${\bf S}_d(\rho)=\Lambda_{1, d}(\rho)-\Lambda_{3, d}(\rho)$. {In this case,} we must have $\Lambda_{3, d}(\rho)>\Lambda_{5, d}(\rho)\ge \Lambda_{5+2h, d}(\rho)$. Indeed, assuming {towards} a contradiction that $\Lambda_{3, d}(\rho)=\Lambda_{5, d}(\rho)$, we would have $J_{\nu+4}(\rho)=0$ and so $(J_{\nu+2}J_{\nu+3})(\rho)>0$. But this contradicts $\mathfrak{J}_\nu(\rho)<0$; indeed, using the Bessel recursion {as before}, we see that
    \begin{equation}\label{eq:nontrivial_case_ii_stability}
        \begin{split}
             \mathfrak{J}_\nu(\rho)
            &=\frac{2}{\rho} {(\nu+1)}J_{\nu +1}^2(\rho)+(J_{\nu+2}J_{\nu +3})(\rho),
        \end{split}
    \end{equation}
    and the right-hand side is positive. We conclude that $\mathcal{E}_d(\rho)=\mathcal{H}_3$.
    
    Finally, if $\mathfrak{J}_\nu(\rho)=0$, then
    \begin{equation}\label{eq:stab_ii_concl}
        \mathbf{S}_d(\rho)=\Lambda_{1, d}(\rho)-\Lambda_{0, d}(\rho)=\Lambda_{1, d}(\rho)-\Lambda_{3, d}(\rho).
    \end{equation}
    Now, $\Lambda_{0, d}(\rho)>\Lambda_{2, d}(\rho)\ge \Lambda_{2+2h, d}(\rho)$, because $J_{\nu+1}(\rho)\ne 0$. {On the other hand, we have}  $\Lambda_{3, d}(\rho)>\Lambda_{5, d}(\rho)\ge \Lambda_{5+2h, d}(\rho)$ because $J_{\nu+4}(\rho)\ne 0$, which is proved by contradiction as we did in the previous paragraph. We conclude that $\mathcal{E}_d(\rho)=\mathcal{H}_0\oplus\mathcal{H}_3$. 
    
     {\it{Case}} (iii): $J_{\nu}(\rho)=0$. By the inequalities~\eqref{eq:big_chain_iii}, 
    \begin{equation}\label{eq:S_d_case_iii}
        \mathbf{S}_d(\rho)=\Lambda_{0, d}(\rho)-\max_{k  {\in\{2, 3\}}} \Lambda_{k, d}(\rho).
    \end{equation}
    We have that $\Lambda_{2, d}(\rho)-\Lambda_{3, d}(\rho)=(J_{\nu+2}J_{\nu+3})(\rho)$; if the latter is strictly positive, then ${\bf S}_d(\rho)=\Lambda_{0, d}(\rho)-\Lambda_{2, d}(\rho)$ and $\Lambda_{2, d}(\rho)>\Lambda_{4, d}(\rho)\ge \Lambda_{4+2h, d}(\rho)$ because $J_{\nu+3}(\rho)\ne 0$, so $\mathcal{E}_d(\rho)=\mathcal{H}_2$. If, on the other hand, $(J_{\nu+2}J_{\nu+3})(\rho)<0$, which implies $J_{\nu+4}(\rho)\ne 0$, then $  {\bf S}_d(\rho)=\Lambda_{0, d}(\rho)-\Lambda_{3, d}(\rho)$ and $\Lambda_{3, d}(\rho)>\Lambda_{5, d}(\rho)\ge \Lambda_{5+2h, d}(\rho)$, and we conclude that  $\mathcal{E}_d(\rho)=\mathcal{H}_3$. The case $(J_{\nu+2}J_{\nu +3})(\rho)=0$ does not occur, as it would contradict Lemma \ref{lem:Bourget}. 
    
     {\it{Case}} (iv): $J_{\nu +1}(\rho)=0$. By the inequalities~\eqref{eq:big_chain_iv}, \begin{equation}\label{eq:S_d_case_iv}
        {\bf S}_d(\rho)=\Lambda_{0, d}(\rho)-\max_{k  {\in\{3, 4\}}} \Lambda_{k, d}(\rho).
    \end{equation}
    The proof follows the exact same steps of the previous case upon replacing $\nu$ by $\nu+1$.
    
    The proof of Theorem~\ref{thm:stability} is complete.
    
\section{ {Regularity of ${\bf C}_d(\rho)$ and ${\bf S}_d(\rho)$: Proof of Corollary \ref{cor:regularity}}}\label{sec:regularity}
  {Recall} that $  {\{j_{\nu, k}\}_{k\geq 1}}$ denotes the sequence of positive zeroes of $J_\nu$; see Appendix   {\ref{app:Bessel}}. To prove Corollary~\ref{cor:regularity}, we start by showing that ${\bf C}_d(\rho)$ is not differentiable at $j_{\nu,k}$, where $k\geq 1$ is arbitrary. The argument for $j_{\nu+1,k}$ is entirely analogous.
From   {the proof of} Theorem~\ref{thm:main} and identity~\eqref{eq:recursion_odd}, respectively, we have that 
\begin{equation}\label{eq:prepare_proof_Cd_nondiff}
    \begin{array}{ccc}
        {\bf C}_d(\rho)=\max\{\Lambda_{0,d}(\rho),\Lambda_{1,d}(\rho)\}& \text{and} & \Lambda_{0,d}(\rho)-\Lambda_{1,d}(\rho)=(J_\nu J_{\nu+1})(\rho).
    \end{array}
\end{equation}
If ${\bf C}_d(\rho)$ were differentiable at $\rho=j_{\nu,k}$, then necessarily $\Lambda_{0,d}'(j_{\nu,k})=\Lambda_{1,d}'(j_{\nu,k})$. Instead,
\begin{equation}\label{eq:non_diff_Cd}
    \Lambda'_{0,d}(j_{\nu,k})-\Lambda'_{1,d}(j_{\nu,k})=J'_\nu(j_{\nu,k}) J_{\nu+1}(j_{\nu,k})+J_\nu(j_{\nu,k}) J'_{\nu+1}(j_{\nu,k}) =J'_\nu(j_{\nu,k}) J_{\nu+1}(j_{\nu,k})\neq 0.
\end{equation}
Indeed, $J'_\nu(j_{\nu,k})\neq 0$ since all the zeroes of $J_\nu$ are simple, and $J_{\nu+1}(j_{\nu,k})\neq 0$ in light of Lemma \ref{lem:Bourget}.

We now show that ${\bf C}_d(\rho)=\max\{\Lambda_{0,d}(\rho),\Lambda_{1,d}(\rho)\}$ defines a Lipschitz function on the positive half-line $  {(}0,\infty)$. Since the maximum of two Lipschitz functions is Lipschitz, it will suffice to show that $\Lambda_{0,d}(\rho)$ is Lipschitz; the proof for $\Lambda_{1,d}(\rho)$ is entirely analogous. In fact, the derivative of \begin{equation}\label{eq:LambdaZero}
    \Lambda_{0,d}(\rho)=\frac\rho 2 J_{\nu}^2(\rho) - \frac \rho 2 (J_{\nu-1}J_{\nu+1})(\rho)
\end{equation}
is uniformly bounded on $  {(}0,\infty)$. To prove this, we invoke the Bessel recursion~\eqref{eq:RR1} to   {obtain}, for $\rho>0$, 
\begin{equation}\label{eq:LambdaPrime}
\Lambda_{0,d}'(\rho)=\frac12(J_\nu^2-J_{\nu-1}J_{\nu+1})(\rho)+\frac \rho4({J_{\nu-1}J_{\nu}-J_{\nu-2}J_{\nu+1}-J_{\nu}J_{\nu+1}+J_{\nu-1}J_{\nu+2}})(\rho).
\end{equation}
In particular,   {a direct computation shows that} the right derivative at 0 satisfies $\Lambda_{0,2}'(0^+)=\frac12$, and  $\Lambda_{0,d}'(0^+)=0$, for   {$d\in\{3,4,5\}$}.   {For $d\geq 6$, the fact that $\Lambda_{0,d}'(0^+)=0$ follows at once from the behaviour of the Bessel functions at the origin, which in turn can be read off from \eqref{eq:BesselTaylor}}.
On the other hand, the asymptotic~\eqref{eq:bessel_asympt} of the Bessel functions at infinity guarantees that~\eqref{eq:LambdaPrime} remains bounded, as $\rho\to\infty$. 

Finally,   {recall that the zeroes of $J_\nu$ and $J_{\nu+1}$ interlace}. On each interval $(j_{\nu,k},j_{\nu+1,k})$, resp. $(j_{\nu+1,k},j_{\nu,k+1})$,  we have that ${\bf C}_d(\rho)$ equals $\Lambda_{0,d}(\rho)$, resp. $\Lambda_{1,d}(\rho)$, and so the claimed real-analyticity follows from identity \eqref{eq:Lommel}.

Now we turn to the analysis of ${\bf S}_d(\rho)$.
Start by noting that, from   {the proof of} Theorem \ref{thm:stability}, it follows that ${\bf S}_d(\rho)>0$, for every $\rho>0$. 
In order to show that ${\bf S}_d(\rho)$ has a jump discontinuity at each positive zero of $J_\nu J_{\nu+1}$, it suffices to check that 
\begin{equation}\label{eq:Limits}
\lim_{\rho\to j_{\nu,k}} {\bf S}_d(\rho)=0=\lim_{\rho\to j_{\nu+1,k}} {\bf S}_d(\rho),
\end{equation}
whenever $k\geq 1$.
We verify the first identity in \eqref{eq:Limits}, since the second one can be dealt with in an analogous way.
Choose $k\geq 1$, set $\rho^\star:=j_{\nu,k}$, and note that from Theorem~\ref{thm:main} it follows that $\mathcal M_d(\rho)\subsetneq \mathcal M_d(\rho^\star)$, whenever $\rho\neq \rho^\star$ is sufficiently close to $\rho^\star$.
Let $f_\star\in\mathcal M_d(\rho^\star)\setminus \mathcal M_d(\rho)$, for every such $\rho\neq \rho^\star$. By Theorem~\ref{thm:stability},
\begin{equation}\label{eq:proof_Sd_jumps}
    0\leq {\bf S}_d(\rho) \textup{d}^2(f_\star,\mathcal M_d(\rho)) \leq \delta_d(f_\star;\rho)=\mathbf{C}_d(\rho)\lVert f_\star\rVert_{L^2(\mathbb S^{d-1})}^2-\frac{1}{\rho}\int_{B_\rho} \lvert \widehat{f_\star\sigma}  {(x)}\rvert^2\frac{  {\d} x}{(2\pi)^d}.
\end{equation}
In the last equation we recalled the definition of $\delta_d(f_\star;\rho)$ to facilitate the proof of its continuity in   {the variable} $\rho>0$; indeed, we already proved that $\mathbf{C}_d(\rho)$   {defines a} continuous   {function of $\rho$}, and the other  {(integral)} term is also seen to be continuous  {in $\rho$} by dominated convergence. So $\delta_d(f_\star;\rho)\to 0$, as $\rho\to \rho^\star$, 
and since $\textup{d}^2(f_\star,\mathcal M_d(\rho))\neq 0$ is independent of $\rho$ for all $\rho\neq \rho^\star$ under consideration, identity \eqref{eq:Limits} follows.
This establishes the first claim about ${\bf S}_d(\rho)$. The second claim  follows similarly to the corresponding claim about ${\bf C}_d(\rho)$, with an entirely analogous proof, which is therefore omitted.

We finish by showing that ${\bf S}_d(\rho)$ is not differentiable at $j_{\nu+2,k}$, for arbitrary $k\geq 1$. 
If $\rho$ is sufficiently close to $j_{\nu+2,k}$, then necessarily $(J_\nu J_{\nu+1})(\rho)>0$. This follows by continuity from $(J_\nu J_{\nu+1})(j_{\nu+2,k})>0$, which in turn is a consequence of Lemma~\ref{lem:bessel_lemma}.
By  {C}ase (i) of Theorem~\ref{thm:stability}  and identity~\eqref{eq:recursion_odd} respectively, we then have that
\begin{equation}\label{eq:non_diff_Sd}
    \begin{array}{ccc}
        {\bf S}_d(\rho)=\Lambda_{0,d}(\rho)-\max\{\Lambda_{1,d}(\rho),\Lambda_{2,d}(\rho)\}& \text{and} & \Lambda_{1,d}(\rho)-\Lambda_{2,d}(\rho)=(J_{\nu+1}J_{\nu+2})(\rho).
    \end{array}
\end{equation}
The rest of the argument is similar to what we did before:
If ${\bf S}_d(\rho)$ were differentiable at $j_{\nu+2,k}$, then necessarily $\Lambda'_{1,d}(j_{\nu+2,k})=\Lambda'_{2,d}(j_{\nu+2,k})$, but instead we have
\[\Lambda'_{1,d}(j_{\nu+2,k})-\Lambda'_{2,d}(j_{\nu+2,k})
=J_{\nu+1}(j_{\nu+2,k})J_{\nu+2}'(j_{\nu+2,k})\neq 0,\]
in light of Lemma \ref{lem:Bourget} and the simplicity of the zeroes of $J_{\nu+2}$. 

This concludes the proof of Corollary \ref{cor:regularity}.

\section{The $\rho\to \infty$ limit  {of the Agmon--H\"ormander estimate}}
\label{sec:limit}

The following  {result} is the case of~\cite[Theorem~3.1]{AgHo76}  {which} is relevant to the present paper. To facilitate the reading, we provide here an adaptation of the original proof. We will then conclude by recovering the same result via the method of the previous sections.
\begin{theorem}\label{thm:AgHorLimit}
    For each $f\in L^2(\mathbb S^{d-1})$,
    \begin{equation}\label{eq:AgHorLimit_section}
        \lim_{\rho\to \infty} \frac1\rho \int_{B_\rho}\lvert \widehat{f\sigma}(x)\rvert^2\, \frac{\d x}{(2\pi)^d}=\frac1\pi \lVert f\rVert_{L^2  {(\mathbb S^{d-1})}}^2.
    \end{equation}
\end{theorem}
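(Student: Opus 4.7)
The plan is to pass to the limit $\rho\to\infty$ directly inside the diagonal expansion derived in Section~\ref{sec:ProofThm1}. Writing $f=\sum_{k\in F}Y_k$ as in \eqref{eq:spherical_decomp}, identity \eqref{eq:AgHor_diag} reads
\begin{equation*}
\frac1\rho\int_{B_\rho}\lvert\widehat{f\sigma}(x)\rvert^2\,\frac{\d x}{(2\pi)^d}=\sum_{k\in F}\Lambda_{k,d}(\rho)\,\lVert Y_k\rVert_{L^2(\mathbb S^{d-1})}^2,
\end{equation*}
so it suffices to exchange $\lim_{\rho\to\infty}$ with the sum and to show that each coefficient $\Lambda_{k,d}(\rho)$ tends to $1/\pi$. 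I would carry this out by dominated convergence on the counting measure on $\mathbb N_{\ge 0}$.

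I would first establish the pointwise limit $\lim_{\rho\to\infty}\Lambda_{k,d}(\rho)=\tfrac1\pi$ for each fixed $k\in\mathbb N_{\ge 0}$. Plugging the Bessel asymptotic \eqref{eq:bessel_asympt} into the definition \eqref{eq:Lambda_Intro} and abbreviating $\phi_k:=\rho-(\nu+k)\tfrac\pi2-\tfrac\pi4$, a short trigonometric calculation produces
\begin{equation*}
\Lambda_{k,d}(\rho)=\frac1\pi\cos^2\phi_k-\frac1\pi\cos\!\bigl(\phi_k+\tfrac\pi2\bigr)\cos\!\bigl(\phi_k-\tfrac\pi2\bigr)+O(\rho^{-1})=\frac1\pi+O(\rho^{-1}),
\end{equation*}
since $\cos(\phi_k\pm\pi/2)=\mp\sin\phi_k$ renders the cross term equal to $-\tfrac1\pi\sin^2\phi_k$, and $\cos^2\phi_k+\sin^2\phi_k=1$. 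Next I would secure a summable majorant: by Lemma~\ref{lem:Rayleigh_quotient}, $0\le\Lambda_{k,d}(\rho)\le\mathbf{C}_d(\rho)$ for every $k\ge 0$ and $\rho>0$, and the original uniform estimate of Agmon--H\"ormander~\cite{AgHo76} yields $M:=\sup_{\rho>0}\mathbf{C}_d(\rho)<\infty$. Thus $\Lambda_{k,d}(\rho)\lVert Y_k\rVert_{L^2(\mathbb S^{d-1})}^2\le M\lVert Y_k\rVert_{L^2(\mathbb S^{d-1})}^2$, and the series $\sum_k M\lVert Y_k\rVert_{L^2(\mathbb S^{d-1})}^2=M\lVert f\rVert_{L^2(\mathbb S^{d-1})}^2$ converges.

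Dominated convergence then yields
\begin{equation*}
\lim_{\rho\to\infty}\sum_{k\in F}\Lambda_{k,d}(\rho)\,\lVert Y_k\rVert_{L^2(\mathbb S^{d-1})}^2=\frac1\pi\sum_{k\in F}\lVert Y_k\rVert_{L^2(\mathbb S^{d-1})}^2=\frac1\pi\lVert f\rVert_{L^2(\mathbb S^{d-1})}^2,
\end{equation*}
which is \eqref{eq:AgHorLimit_section}. I expect the main obstacle to be securing a $k$-uniform majorant: the pointwise convergence $\Lambda_{k,d}(\rho)\to 1/\pi$ is not uniform in $k$, since for each fixed $\rho$ the small-argument behaviour of $J_{\nu+k}$ (cf.~\eqref{eq:crude_bound}) forces $\Lambda_{k,d}(\rho)\to 0$ as $k\to\infty$. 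Routing the domination through the boundedness of $\mathbf{C}_d(\rho)$ sidesteps this cleanly, but one could alternatively argue by a direct integral estimate on $\tfrac1\rho\int_0^\rho J_{\nu+k}^2(r)r\,\d r$ based on \eqref{eq:Lommel}.
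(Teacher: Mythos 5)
Your argument is correct, but it is not the route taken in the paper's formal proof of this theorem. The paper proves Theorem~\ref{thm:AgHorLimit} by Plancherel's theorem: it rewrites $\rho^{-1}\int_{B_\rho}|\widehat{f\sigma}|^2$ as a bilinear integral against $\widehat{\mathbbm{1}_{B}}(\rho(\xi-\eta))$, rescales, passes to the limit in the sense of distributions, and evaluates the resulting integral $\int\widehat{\mathbbm{1}_B}(y)\,\boldsymbol\delta(z\cdot y)\,\d y/(2\pi)^d=1/\pi$; this is self-contained and does not use the spherical-harmonic diagonalisation at all. Your proof instead runs through the expansion \eqref{eq:AgHor_diag} and the modewise limit \eqref{eq:Lambda_Limit} --- which is precisely the alternative derivation the paper sketches at the end of \S\ref{sec:limit} --- and your asymptotic computation matches \eqref{eq:Lambda_k_d_asymptotic} (the paper first rewrites $\Lambda_{k,d}$ via \eqref{eq:LambdaAlternative} so as to involve only $J_{\nu+k}$ and $J_{\nu+k+1}$, which also sidesteps the cosmetic issue of applying \eqref{eq:bessel_asympt} to $J_{\nu+k-1}=J_{-1}$ when $k=0$, $d=2$; for you this is harmless since $J_{-1}=-J_1$). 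The genuine value you add is the dominated-convergence step: the paper asserts that \eqref{eq:AgHorLimit_section} is ``equivalent'' to \eqref{eq:Lambda_Limit}, but passing from the fixed-$k$ limits to general $f$ requires exactly the $k$-uniform, $\rho$-uniform majorant $\Lambda_{k,d}(\rho)\le\sup_{\rho>0}\mathbf{C}_d(\rho)<\infty$ that you supply (and which, as you note, is essential because $\Lambda_{k,d}(\rho)\to0$ as $k\to\infty$ for fixed $\rho$, so the convergence is not uniform in $k$). One small wording slip: the product $\cos(\phi_k+\tfrac\pi2)\cos(\phi_k-\tfrac\pi2)$ equals $-\sin^2\phi_k$, so the cross term in $\Lambda_{k,d}$, carrying the minus sign of \eqref{eq:Lambda_Intro}, contributes $+\tfrac1\pi\sin^2\phi_k$; your displayed identity is correct, but the parenthetical sentence attaches the sign to the wrong object. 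In sum: correct proof, coinciding with the paper's secondary argument and completing the one step that argument leaves implicit, while the paper's primary proof buys independence from the machinery of \S\ref{sec:ProofThm1}.
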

\begin{proof}
    Let $\mathbbm{1}_B$ denote the indicator function of $B:=B_1$, the unit ball   {in $\mathbb R^d$} centred at the origin. By Plancherel's Theorem, 
    \begin{equation}\label{eq:StraightPlancherel}
        \frac1\rho\int_{B_\rho} \lvert \widehat{f\sigma}(x)\rvert^2\, \frac{\d x}{(2\pi)^d} =\rho^{d-1}\iint_{(\mathbb S^{d-1})^2}\!\! f(\xi)\overline{f(\eta)} \widehat{\mathbbm{1}_B}(\rho(\xi-\eta))\, \frac{\d\sigma(\xi)\d\sigma(\eta)}{(2\pi)^d}.
    \end{equation}
    We will use the formula  $\d\sigma(\xi)=2\ddirac{1-\lvert\xi\rvert^2}\, \d\xi$, where $\ddirac{\cdot}$ denotes the one-dimensional Dirac distribution; see, for example, the appendix to~\cite{FoOl17} for more details on this and other formulae of this kind. Before letting $\rho\to \infty$, we apply the change of variables 
    \begin{equation}\label{eq:varchange}
        \begin{array}{rcl}
            \begin{cases}
                    y=\rho(\xi-\eta), \\
                    z=\eta,
            \end{cases}
            &
            \text{so}& \displaystyle
            \d\sigma(\xi)\d\sigma(\eta)
            =\frac{2}{\rho^d}\ddirac{ 1-\left\lvert z-\frac y \rho \right\rvert^2}\d y\d\sigma(z).
        \end{array}
    \end{equation}
    We also observe that, for $\lvert z\rvert=1$, \begin{equation}\label{eq:DeltaManipulation}
        \ddirac{1-\left\lvert z-\frac y \rho\right\rvert^2} = \frac \rho2\ddirac{z\cdot y -\frac{\lvert y \rvert^2}{2\rho}}.
    \end{equation}
     {Thus} we see that~\eqref{eq:StraightPlancherel} equals
    \begin{equation}\label{eq:PassLimit}
        \begin{split}
            & \frac{2}{\rho}\iint_{\mathbb R^d\times \mathbb S^{d-1}} f\left(z+\frac y \rho\right) \overline{f(z)} \widehat{\mathbbm{1}_B}(y)\ddirac{1-\left\lvert z-\frac y \rho\right\rvert^2}\, \frac{\d y\d\sigma(z)}{(2\pi)^d} \\ 
            & \longrightarrow  \int_{\mathbb S^{d-1}}\lvert f(z)\rvert^2\left(\int_{\mathbb R^d} \widehat{\mathbbm{1}_B}(y)\ddirac{z\cdot y}\, \frac{\d y}{(2\pi)^d}\right)\, \d\sigma(z),\text{ as }\rho\to\infty.
        \end{split}
    \end{equation}
    We conclude by evaluating the   {latter inner} integral. Since $\mathbbm{1}_B$ is radially symmetric, that integral is independent on $z\in\mathbb S^{d-1}$, so we assume that $z=(0, \ldots,0, 1)$ and obtain  
    \begin{equation}\label{eq:Radon_transform}
        \begin{split}
            \int_{\mathbb R^d} \widehat{\mathbbm{1}_B}(y)\ddirac{z\cdot y}\, \frac{\d y}{(2\pi)^d} &= \int_{\mathbb R^{d-1}} \widehat{\mathbbm{1}_B}(y_1, \ldots, y_{d-1}, 0) \frac{\d y_1\ldots \d y_{d-1}}{(2\pi)^d} \\ 
            &= \int_{-\infty}^\infty \mathbbm{1}_B(0, \ldots, 0, \xi_d)\,\frac{ \d\xi_d}{2\pi} =\frac1\pi.
        \end{split}
    \end{equation}
    The proof is complete.
\end{proof}
As we saw in  {\S}\ref{sec:ProofThm1}, Theorem~\ref{thm:AgHorLimit} is equivalent to the statement that 
\begin{equation}\label{eq:Lambda_Limit}
    \begin{array}{cc}\displaystyle
        \lim_{\rho\to \infty} \Lambda_{k, d}(\rho)=\frac1\pi, &   {\text{for every}} \,k \in \mathbb N_{\ge 0}.
    \end{array}
\end{equation}
We give a direct proof of this statement. Applying the Bessel recursion~\eqref{eq:RR2}, we obtain the following alternative expression for $\Lambda_{k, d}(\rho)$:
\begin{equation}\label{eq:LambdaAlternative}
    \begin{split}
        \Lambda_{k, d}(\rho)
        &=\frac{\rho}{2}\left( J_{\nu+k}^2(\rho)-\frac{2(\nu+k)}{\rho} (J_{\nu+k}J_{\nu+k+1})(\rho) + J_{\nu+k+1}^2(\rho)\right).
    \end{split}
\end{equation}
By the Bessel asymptotic~\eqref{eq:bessel_asympt}, we see that
\begin{equation}\label{eq:Lambda_k_d_asymptotic}
    \begin{split}
        \Lambda_{k, d}(\rho)&=\frac1\pi\left( \cos^2\left( \rho-\frac{2\nu+2k+1}{4}\pi\right) + \cos^2\left( \rho-\frac{2\nu+2k+3}{4}\pi\right)\right) +O\left(\rho^{-1}\right) \\
        &=\frac1\pi \left( \cos^2\left( \rho-\frac{2\nu+2k+1}{4}\pi\right) + \sin^2\left( \rho-\frac{2\nu+2k+1}{4}\pi\right)\right) +O\left(\rho^{-1}\right) \\
        &=\frac1\pi + O\left(\rho^{-1}\right),
    \end{split}
\end{equation}
which proves~\eqref{eq:Lambda_Limit}.

\section*{Acknowledgements}
GN and DOS are supported by the EPSRC New Investigator Award ``Sharp Fourier Restriction Theory'', grant no.\@ EP/T001364/1.
DOS acknowledges partial support from the Deutsche Forschungsgemeinschaft under Germany's Excellence Strategy -- EXC-2047/1 -- 390685813.
The authors thank the anonymous referee for carefully reading the manuscript and valuable suggestions.

\appendix
\section{Bessel Functions}\label{app:Bessel}
Bessel functions can be defined in a number of ways.
We follow the classical treatise \cite{Wa44} and define, for $\alpha>-1$ and $\Re(z)>0$, 
\begin{equation}\label{eq:BesselTaylor}
J_\alpha(z)=\left(\frac z2\right)^\alpha \sum_{n=0}^\infty \frac{(-1)^n(\frac z2)^{ 2 n}}{n!\Gamma(\alpha+n+1)}.
\end{equation}
When $\alpha\ge 0$, since $\Gamma(\alpha+n+1)\ge \Gamma(\alpha+1)$ for $n\in\mathbb N_{\ge 0}$, we have the {crude} bound
\begin{equation}\label{eq:crude_bound}
    \begin{array}{cc}\displaystyle
        \lvert J_\alpha(z)\rvert \le \left( \frac{\lvert z \rvert}{2}\right)^\alpha \frac{e^{\lvert z \rvert^2/4}}{\Gamma(\alpha +1)}, &\displaystyle \text{ so in particular } \lim_{\alpha \to \infty} J_\alpha(z)=0.
    \end{array}
\end{equation}

The Bessel function $J_\alpha$ satisfies the following recursion relations: 
\begin{align}
   J_{\alpha-1}(z)+J_{\alpha+1}(z)&=\frac{2\alpha}z J_\alpha(z), \label{eq:RR2} \\
   J_{\alpha-1}(z)-J_{\alpha+1}(z)&=2J_\alpha'(z).\label{eq:RR1}
\end{align}
As a consequence of \eqref{eq:RR2}, we have the following simple fact, which is used several times throughout the text. The proof is immediate. 
\begin{lemma}\label{lem:bessel_lemma}
    If  {$\alpha,z>0$} and $J_{  {\alpha+1}}(z)=0$, then $(J_{  {\alpha-1}}J_{  {\alpha}})(z)>0$.
\end{lemma}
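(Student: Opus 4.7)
The plan is to read off the conclusion directly from the three-term recursion \eqref{eq:RR2}. Setting $J_{\alpha+1}(z)=0$ in \eqref{eq:RR2} gives $J_{\alpha-1}(z)=\frac{2\alpha}{z}J_\alpha(z)$, so
\[
(J_{\alpha-1}J_\alpha)(z)=\frac{2\alpha}{z}\,J_\alpha(z)^2.
\]
Since $\alpha>0$ and $z>0$, the right-hand side is automatically nonnegative; the only content of the lemma is to upgrade this to strict positivity, i.e., to show that $J_\alpha(z)\neq 0$ whenever $J_{\alpha+1}(z)=0$ and $z>0$.

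For this, I would argue that two consecutive Bessel functions $J_\alpha$ and $J_{\alpha+1}$ cannot share a positive zero. Suppose for contradiction that $J_\alpha(z)=J_{\alpha+1}(z)=0$. Plugging $J_\alpha(z)=0$ into \eqref{eq:RR2} yields $J_{\alpha-1}(z)=-J_{\alpha+1}(z)=0$, and then \eqref{eq:RR1} gives $J_\alpha'(z)=\tfrac12(J_{\alpha-1}(z)-J_{\alpha+1}(z))=0$. But $J_\alpha$ is a nontrivial solution of the second-order Bessel ODE $z^2 u''+zu'+(z^2-\alpha^2)u=0$, so $J_\alpha(z)=J_\alpha'(z)=0$ forces $J_\alpha\equiv 0$ by uniqueness, a contradiction.

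Thus $J_\alpha(z)\neq 0$, and the identity above yields $(J_{\alpha-1}J_\alpha)(z)=\tfrac{2\alpha}{z}J_\alpha(z)^2>0$, as claimed.

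\textbf{Main obstacle.} There is essentially none: once the recursion is applied, everything reduces to the well-known fact that consecutive Bessel functions do not share positive zeros, which itself is a one-line consequence of \eqref{eq:RR2}--\eqref{eq:RR1} together with the uniqueness theorem for the Bessel ODE. This is presumably why the authors remark that ``the proof is immediate''.
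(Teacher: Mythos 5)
Your proof is correct and matches the paper's intended (one-line) argument: substitute $J_{\alpha+1}(z)=0$ into \eqref{eq:RR2} to get $(J_{\alpha-1}J_\alpha)(z)=\tfrac{2\alpha}{z}J_\alpha^2(z)$, then note $J_\alpha(z)\neq 0$. The only difference is that you justify the nonvanishing of $J_\alpha(z)$ by a self-contained ODE uniqueness argument, whereas the paper implicitly relies on the interlacing/no-common-zeros facts recorded in Appendix~\ref{app:Bessel}; both are fine.
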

For any fixed $\alpha\geq 0$, {and $r>0$}, one has the following asymptotic at infinity:
\begin{equation}\label{eq:bessel_asympt}
    J_\alpha(r)=\left(\frac{\pi r}2\right)^{-\frac12} \cos\left( r-\frac{2\alpha +1}{4}\pi\right) +O\left({r^{-\frac32}}\right),  \text{ as } r\to\infty.
\end{equation}
The Bessel function $J_\alpha$ is entire if $\alpha$ is an integer, otherwise it is a multivalued function with a singularity at the origin. 
However, when $\alpha$ is an half-integer, then $J_\alpha$ is an elementary function, and one easily checks that it is real-analytic on the positive half-line $(0,\infty)$.

Finally we discuss zeroes of Bessel functions.
The function $J_\alpha$ has infinitely many positive zeros, all of which are simple, isolated, and denoted by
\[0<j_{\alpha,1}<j_{\alpha,2}<j_{\alpha,3}<\ldots\]
  {The zeroes $\{j_{\alpha,k}\}_{k\geq 1}$ and $\{j_{\alpha+1,k}\}_{k\geq 1}$ are well-known to interlace. 
  The following result is known as {\it Bourget hypothesis}; see \cite[\S 15.28]{Wa44}.
  \begin{lemma}[Bourget hypothesis]\label{lem:Bourget}
  Let $\alpha\geq 0$ be rational and  $m \geq 1$ be an integer.
  Then the functions $J_\alpha(r)$ and $J_{\alpha+ m}(r)$ have no common zeros other than the one at $r = 0$. 
  \end{lemma}
The reader will have noticed that, for the purposes of the present paper, only the considerably easier cases $m\in\{1,2,3,4\}$ of Lemma \ref{lem:Bourget} were needed.

\end{document}